\def\journal@name{}%
\def\journal@url{}%
\def\journal@issn{}%
\theoremstyle{plain}
\newtheorem{theorem}{Theorem}[section]
\newtheorem{proposition}[theorem]{Proposition}
\newtheorem{corollary}[theorem]{Corollary}
\theoremstyle{definition}
\newtheorem{remark}[theorem]{Remark}
\newcommand{\conn}{\boldsymbol{C}}
\newcommand{\connaux}{\boldsymbol{\Lambda}}
\newcommand{\Reals}{\mathbb{R}}
\newcommand{\samFr}{\bar{\conn}_n}
\newcommand{\samCov}{\hat{\mathbf{S}}_n}
\newcommand{\convPr}{\overset{P}{\rightarrow}}
\newcommand{\Mfd}{\mathcal{M}}
\newcommand{\idconn}{\mathbf{I}}
\newcommand{\norm}[1]{\left\lVert#1\right\rVert}
\DeclareMathOperator{\Vect}{Vect}
\DeclareMathOperator{\Expo}{Exp}
\DeclareMathOperator{\Mah}{Mah}
\begin{document}

\begin{frontmatter}
\title{A Novel Testing Approach for Differences Among Brain Connectomes}
\runtitle{Riemannian ANOVA for Brain Connectomes}

\begin{aug}
\author[A]{\inits{N.}\fnms{Nicolas}~\snm{Escobar}\ead[label=e1]{nescoba@iu.edu}}
\author[B]{\inits{J.}\fnms{Jaroslaw}~\snm{Harezlak}\ead[label=e2]{harezlak@iu.edu}}

\address[A]{Department of Statistics,
Indiana University, 729 E. 3rd Street, Bloomington, IN 47405, USA\printead{e1}}

\address[B]{Department of Epidemiology and Biostatistics,
Indiana University, 1025 E. 7th Street, Bloomington, IN 47405, USA\printead{e2}}
\end{aug}

\begin{abstract}
The study of samples taking values in non-Euclidean metric spaces is an important part of modern statistics. A prominent example is brain connectomics, where data consists of symmetric, positive definite (SPD) matrices. Fr\'echet ANOVA has played a significant role in this area, due in part to its wide applicability, but two gaps remain. First, Fr\'echet ANOVA is not a generalization of MANOVA; such a generalization has not been sufficiently explored, and we propose one here. Second, its power function has not been systematically studied; we address this gap and show that in some scenarios our generalization is more powerful. These contributions demonstrate that the study of non-Euclidean samples admits additional directions worthy of further investigation.
\end{abstract}

\begin{keyword}
\kwd{ANOVA}
\kwd{Connectomics}
\kwd{Riemannian geometry}
\end{keyword}

\end{frontmatter}

\section{Introduction}

The goal of generalizing statistical methods, particularly multivariate ones, to settings in which data poses structure other than (or on top of) a regular vector is an old one. The practice of statistics requires the measure of similarity, so these generalizations are often made in the setting of metric spaces. Underlying probabilistic methods for that setting have been based on the concept of Fr\'echet mean \citep{karcher}. The sample mean has been shown to possess desirable properties including existence, uniqueness for negatively curved manifolds \citep{kendall}, consistency \citep{batta}, and asymptotic normality \citep{clt}. Using that concept, we can generalize traditional statistical models like linear regression to that setting as well \citep{fletcher}.

Oftentimes, data has a layer of structure that sits on top of the metric space, that of a Riemannian manifold, allowing for additional complexity in statistical modeling, strengthening what we can say about linear regression \citep{cornea} and facilitating the construction of other models.

We obtain yet another layer of structure when we focus on specific manifolds, the one of symmetric positive definite $p\times p$ matrices, $\mathcal M = \mathrm{SPD}_p$ being the one that is of interest to us. The seminal work of several researchers \citep{pennec, logeu, LRDF06} illustrate how the choice of a specific Riemannian metric turns these abstract ideas into actually computable statistical models with concrete applications to brain imaging \citep{varo}. This hierarchy of structure means that statistical modeling of non-Euclidean data lives on a spectrum. On the general side of the spectrum, if we make mild assumptions about the structure of the data, we obtain widely applicable models. On the specific side, if we have a concrete structure in mind, we can use additional tools.

Fr\'echet ANOVA \citep{muller} is a seminal statistical test for comparing multiple samples of non-Euclidean data. It sits on the general side of that spectrum; it only assumes a metric space structure with mild conditions. The goal of this paper is to propose and analyze a test for a similar hypothesis on the specific side of the spectrum. The idea behind this new test is that the properties of certain Riemannian metrics on $\mathcal M$ are close enough to those of Euclidean space as to allow for classical ANOVA to carry over. In this way, we obtain a test that is much less general than Fr\'echet ANOVA, but as we show, what we lose in generality we gain in power, at least in a certain region of the parameter space.

On a more technical level, this paper addresses two big challenges. First, carrying over classical ANOVA requires us to identify a way to obtain the decomposition of squares that characterizes that method in the manifold setting. We do so by studying projections of the sample to the tangent space of the Fr\'echet mean. Second, we identify distributional assumptions under which we can derive an asymptotic distribution of the new test statistic. Those assumptions correspond to what we call the Riemannian normal likelihood. We also observe that said likelihood has a Riemannian metric associated to it, which we call the MAIRM.

The remainder of this paper is organized as follows. In Section 2, we develop our methodological framework in three stages. We begin by introducing parameter estimation for the MAIRM, a generalization of classical affine invariant metric that relaxes sphericity assumptions. We then construct our Riemannian generalization of Wilks' Lambda, establishing its fundamental properties and showing how it reduces to classical multivariate ANOVA in the Euclidean case. Finally, we derive the asymptotic distribution of our test statistic under the null hypothesis and analyze its asymptotic behavior under alternatives, formally comparing its power properties to those of Fr\'echet ANOVA. Section 3 summarizes our contributions. Technical details on the Riemannian geometry of the SPD manifold and additional computational formulas are provided in the appendix.

\section{Contributions}
\subsection{Mahalanobis AIRM}

In a number of statistical models based on Riemannian geometry ideas, it is assumed that the likelihood depends exclusively on the geodesic distance between observations and some reference point. 
The simplest such model is decribed in \citet{varo}:
for a manifold-valued random variable $\conn$, 
\begin{equation}
	p(\conn) \propto \exp\left(-\frac{1}{2\sigma^2} \norm{\log_{\conn^\ast}(\conn)}_{\conn^\ast}^2\right)
\end{equation}
for some $\conn^\ast \in \Mfd, \sigma \in \Reals^+$. 
Here, $\log_{\conn^\ast}(\conn)$ denotes the Riemannian logarithm map. 
Inference for such a model is simple: given an iid sample $\lbrace \conn_i\rbrace_i$, the MLE estimators for $\conn^\ast$ and $\sigma^2$ are the sample Fréchet mean, $\samFr$ and 
\begin{equation}
	\hat{\sigma}^2 = \frac{1}{n} \sum_{i = 1}^n \norm{\log_{\samFr}(\conn_i)}_{\samFr}^2
\end{equation}
These estimators are independent. 

Such a model makes a sphericity assumption. 
To make it explicit, consider the AIRM metric for concreteness and recall the vectorization maps \citep{pennec}:
\begin{align}
	\Vect_{I}: T_I\Mfd &\rightarrow \Reals^{p(p+1)/2} \\
		\lbrace w_{ij}\rbrace &\mapsto (w_{11}, \sqrt{2}w_{12}, w_{22}, \ldots )
\end{align}
and
\begin{align}
	\Vect_{\conn}: T_{\conn}\Mfd &\rightarrow \Reals^{p(p+1)/2} \\
		\log_{\conn}(\connaux) &\mapsto \Vect_I(\conn^{-1/2} \log_{\conn}(\connaux) \conn^{-1/2})
\end{align}
Here we have used the identification of $T_{\conn}\Mfd$ with the vector space of $p$ by $p$ symmetric matrices.
Both maps are linear isometries. 

The model can be rewritten using these maps as
\begin{align}
	p(\conn) \propto %\\
    \exp\left(-\frac{1}{2\sigma^2} 
	\Vect_{\conn^\ast}(\log_{\conn^\ast}(\conn))^T
	\Vect_{\conn^\ast}(\log_{\conn^\ast}(\conn))\right)
\end{align}
In other words $\Vect_{\conn^\ast}(\log_{\conn^\ast}(\conn)) \sim N_{p(p+1)/2}(0, \sigma^2 I)$. 

Let us relax that assumption and consider the model
\begin{align}
	p(\conn) \propto %\\
	\exp\left(-\frac{1}{2} 
	\Vect_{\conn^\ast}(\log_{\conn^\ast}(\conn))^T
	\Gamma^{-1}
	\Vect_{\conn^\ast}(\log_{\conn^\ast}(\conn))\right)
\end{align}
for some positive definite $\Gamma$. 

Given an iid sample $\lbrace \conn_i \rbrace$, it is not hard to come up with natural estimators for $\conn^\ast$ and $\Gamma$. 
First, notice that $\conn^\ast$ is still the population Fréchet mean, so the sample Fréchet mean $\samFr$ can still be used to estimate it. 
As for $\Gamma$, the estimator would be 
\begin{equation}
	\samCov = \frac{1}{n-1} \sum_{i = 1}^n 
	\Vect_{\samFr}(\log_{\samFr}(\conn_i))
	\Vect_{\samFr}(\log_{\samFr}(\conn_i))^T
\end{equation}

The price we pay for the added generality of this model is that it is no longer clear that $\samFr$ and $(n-1)/n \samCov$ are MLE estimators for $\conn^\ast$ and $\Gamma$. 

However, the fact that $\samFr$ converges in probability to $\conn^\ast$ is a direct consequence of Theorem 2.3 in \citet{batta}.
In addition:

\begin{proposition}
	$\samCov$ converges in probability to $\Gamma$. 
\end{proposition}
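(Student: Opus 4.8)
The plan is to disentangle two sources of randomness: the one carried by each summand and the one injected through the estimated base point $\samFr$. Write $f(P, \conn) := \Vect_P(\log_P(\conn))$ and $g(P, \conn) := f(P, \conn)\, f(P, \conn)^T$, set $G_n(P) := \frac{1}{n}\sum_{i=1}^n g(P, \conn_i)$ and $G(P) := \Expe[g(P, \conn)]$, and note that $\samCov = \frac{n}{n-1} G_n(\samFr)$ with $n/(n-1) \to 1$, so it suffices to show $G_n(\samFr) \convPr \Gamma$. I would split this through
\begin{align*}
\norm{G_n(\samFr) - \Gamma}
\le \underbrace{\norm{G_n(\samFr) - G(\samFr)}}_{(\mathrm{I})}
+ \underbrace{\norm{G(\samFr) - G(\conn^\ast)}}_{(\mathrm{II})},
\end{align*}
using that $G(\conn^\ast) = \Gamma$, which holds by construction of the model: $f(\conn^\ast, \conn) \sim N(0, \Gamma)$ is centered, so $\Expe[g(\conn^\ast, \conn)] = \Gamma$.

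The easy ingredients are the pointwise law of large numbers and term $(\mathrm{II})$. For a fixed base point the matrices $g(\conn^\ast, \conn_i)$ are iid with finite mean $\Gamma$, so the weak law of large numbers gives the ``oracle'' statement $G_n(\conn^\ast) \convPr \Gamma$. Term $(\mathrm{II})$ tends to zero in probability by the continuous mapping theorem, once I verify that $G$ is continuous at $\conn^\ast$; this continuity follows from the joint smoothness of $(P, \conn) \mapsto g(P, \conn)$ (itself a consequence of the smoothness of the logarithm and vectorization maps recorded in the appendices) together with dominated convergence, combined with $\samFr \convPr \conn^\ast$ established from Theorem 2.3 of \citet{clt4}.

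Term $(\mathrm{I})$ is where the coupling between $\samFr$ and the empirical average must be broken, and it is the main obstacle. I would dominate it by a uniform law of large numbers over a closed geodesic ball $K$ centered at $\conn^\ast$; since $\Mfd$ with the AIRM is a Hadamard manifold such balls are compact, and because $\samFr \convPr \conn^\ast$ we have $\samFr \in K$ with probability tending to one, whence $(\mathrm{I}) \le \sup_{P \in K} \norm{G_n(P) - G(P)}$ on that event. The hard part is verifying the envelope (domination) hypothesis $\Expe[\sup_{P \in K} \norm{g(P, \conn)}] < \infty$ that such a uniform law requires. Here the geometry is decisive: since $\Vect_P$ is a linear isometry, $\norm{g(P, \conn)} = \norm{f(P, \conn)}^2 = \norm{\log_P(\conn)}_P^2 = d(P, \conn)^2$, the squared geodesic distance, so the triangle inequality gives the $\conn$-integrable bound $\sup_{P \in K} d(P, \conn)^2 \le 2\,(\operatorname{diam} K)^2 + 2\, d(\conn^\ast, \conn)^2$ with $\Expe[d(\conn^\ast, \conn)^2] = \operatorname{tr}(\Gamma) < \infty$. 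With this envelope and the continuity of $P \mapsto g(P, \conn)$, a standard uniform law of large numbers yields $\sup_{P \in K}\norm{G_n(P) - G(P)} \convPr 0$, so $(\mathrm{I}) \convPr 0$ and the proof is complete. The delicacy lies not in the bound itself but in checking the measurability and equicontinuity conditions needed to invoke the uniform law on $K$.
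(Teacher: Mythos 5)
Your proof is correct, and it is in fact more careful than the paper's own argument, which proceeds along a genuinely different (and more elementary) route. The paper fixes an index $i$ and runs a chain of continuous-mapping implications starting from $\samFr \convPr \conn^\ast$ to conclude that each individual summand $\Vect_{\samFr}(\log_{\samFr}(\conn_i))$ converges in probability to its oracle counterpart $\Vect_{\conn^\ast}(\log_{\conn^\ast}(\conn_i))$, and then asserts that ``the result follows easily''; the step from per-summand convergence to convergence of the average $\frac{1}{n-1}\sum_i$ --- which is exactly where the coupling between $\samFr$ and all $n$ summands must be controlled --- is left implicit. Your decomposition $\norm{G_n(\samFr)-\Gamma} \le \norm{G_n(\samFr)-G(\samFr)} + \norm{G(\samFr)-G(\conn^\ast)}$ addresses precisely that gap: the uniform law of large numbers over a compact geodesic ball, with the integrable envelope $\sup_{P\in K} d(P,\conn)^2 \le 2(\operatorname{diam}K)^2 + 2\,d(\conn^\ast,\conn)^2$ and $\Expe[d(\conn^\ast,\conn)^2]=\operatorname{tr}(\Gamma)<\infty$, is the standard M-estimation device for breaking the dependence on the estimated base point, and your identification $\norm{g(P,\conn)} = \norm{\log_P(\conn)}_P^2 = d(P,\conn)^2$ via the isometry of $\Vect_P$ is the right geometric input. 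What the paper's approach buys is brevity and transparency of the underlying intuition (everything reduces to continuity of $P\mapsto \Vect_P(\log_P(\cdot))$); what yours buys is an actual proof of the aggregation step, at the cost of invoking a uniform law and checking its measurability and equicontinuity hypotheses. Both routes rest on the same external fact, $\samFr \convPr \conn^\ast$.
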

\begin{proof}
	We have the chain of implications:
	% \begin{figure*}[h]
	\begin{align}
		&\samFr \convPr \conn^\ast \\
		&\Rightarrow \samFr^{-1/2} \conn_i \samFr^{-1/2} \convPr  (\conn^\ast)^{-1/2} \conn_i (\conn^\ast)^{-1/2} \\
		&\Rightarrow \log(\samFr^{-1/2} \conn_i \samFr^{-1/2}) \convPr  \\
		&\qquad \log((\conn^\ast)^{-1/2} \conn_i (\conn^\ast)^{-1/2}) \\
		&\Rightarrow \Vect_{\samFr}(\log_{\samFr}(\conn_i)) \convPr \Vect_{\conn^\ast}(\log_{\conn^\ast}(\conn_i))
	\end{align}
	% \end{figure*}
	from which the result follows easily. 
\end{proof}

A sphericity assumption is also used in regression models with Euclidean predictors and connectome-valued responses.
A popular model \citep{fletcher} is based on choosing $\conn^\ast \in \Mfd$, $\lbrace v_k \rbrace_k \in T_{\conn^\ast} \Mfd$, $\sigma^2 \in \Reals^+$ and assumming that the observations $\lbrace (\mathbf{x}_i, \conn_i) \rbrace_i$ are distributed according to the likelihood 
\begin{equation}
	p(\conn_i) \propto 
	\exp\left(
		-\frac{1}{2\sigma^2}
		\norm{\log_{\conn_i^\ast}(\conn_i)}_{\conn_i^\ast}^2
		\right)
\end{equation}
where $\conn^\ast_i = \Expo_{\conn^\ast}(\sum_{k}x_k v_k)$. 

Existence and uniqueness of OLS estimates in general is known \citep{fletcher}. Of course, with this likelihood they are MLE estimators. 

We may relax the sphericity assumption by assumming instead that there is a positive definite $\Gamma$ such that 
\begin{align}
	p(\conn_i) \propto %\nonumber \\ 
	\exp\left(
		-\frac{1}{2} 
		\Vect_{\conn_i^\ast}(\log_{\conn_i^\ast}(\conn_i))^T
		\boldsymbol{\Gamma}^{-1} \Vect_{\conn_i^\ast}(\log_{\conn_i^\ast}(\conn_i))
	\right)
\label{nonsph}
\end{align}

In this case, we have 
\begin{proposition}
	There exist unique MLE estimators for model (\ref{nonsph})
\end{proposition}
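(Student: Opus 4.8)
The plan is to read (\ref{nonsph}) as a multivariate Gaussian regression in the global normal chart supplied by the vectorization maps, and then to maximize the likelihood by first profiling out $\Gamma$ and afterwards optimizing over the geometric parameters. Write $\theta = (\conn^\ast, v_1, \dots, v_m)$ for the geometric parameters, ranging over the bundle $\{(\conn^\ast, v_1, \dots, v_m) : \conn^\ast \in \Mfd,\ v_k \in T_{\conn^\ast}\Mfd\}$, and set $r_i(\theta) = \Vect_{\conn_i^\ast}(\log_{\conn_i^\ast}(\conn_i))$ with $\conn_i^\ast = \Expo_{\conn^\ast}(\sum_k x_{ik} v_k)$ and $d = p(p+1)/2$. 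Since the AIRM is homogeneous, the Riemannian volume element in normal coordinates is the same at every basepoint, so the per-observation normalizing constant depends on $\Gamma$ but not on the center $\conn_i^\ast$; treating the residuals $r_i(\theta)$ as genuinely Gaussian in the chart (as in the spherical case) the normalizer is $(2\pi)^{d/2}\det(\Gamma)^{1/2}$, and the negative log-likelihood is, up to an additive constant,
\[
\ell_n(\theta, \Gamma) = \tfrac{n}{2}\log\det\Gamma + \tfrac{1}{2}\sum_{i=1}^n r_i(\theta)^T \Gamma^{-1} r_i(\theta).
\]

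First I would profile out $\Gamma$. For each fixed $\theta$ the inner problem $\min_{\Gamma \succ 0} \ell_n(\theta, \Gamma)$ is the classical Gaussian covariance MLE: provided $S_n(\theta) := \tfrac{1}{n}\sum_i r_i(\theta) r_i(\theta)^T$ is positive definite --- which holds almost surely once $n \geq d$ and the design places the residuals in general position --- the unique minimizer is $\hat\Gamma(\theta) = S_n(\theta)$, and substituting it yields the concentrated objective $\theta \mapsto \tfrac{n}{2}\log\det S_n(\theta)$ up to a constant. The problem thus reduces to showing that $\log\det S_n(\theta)$ has a unique minimizer over $\theta$; any such minimizer, paired with $\hat\Gamma(\theta)$, is then the unique MLE.

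For existence I would argue coercivity: as $\theta$ leaves every compact set --- either $\conn^\ast$ approaches the boundary of the SPD cone, or some $\|v_k\|_{\conn^\ast} \to \infty$ --- at least one fitted center $\conn_i^\ast$ diverges in $\Mfd$, so the corresponding $\|r_i(\theta)\| \to \infty$ and the largest eigenvalue of $S_n(\theta)$ blows up. The delicate point here is controlling the smallest eigenvalue of $S_n(\theta)$ from below along such sequences, so that $\det S_n(\theta) \to \infty$ rather than one eigenvalue merely diverging while another collapses; I would handle this by exploiting the Hadamard structure of $(\Mfd, \mathrm{AIRM})$, under which the residual map is proper and the residuals cannot degenerate in all directions at once while a center escapes to infinity. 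Continuity of $\log\det S_n$ on the open parameter manifold together with coercivity then gives a global minimizer.

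The main obstacle is uniqueness, because $\log\det$ is matrix-concave, so $\theta \mapsto \log\det S_n(\theta)$ is not convex merely from convexity of $\theta \mapsto S_n(\theta)$. My plan is to recover uniqueness through the known-$\Gamma$ case already settled in \citet{fletcher}. For fixed $\Gamma$, minimizing $\sum_i r_i(\theta)^T \Gamma^{-1} r_i(\theta)$ is exactly geodesic (OLS) regression in the Mahalanobis metric MAIRM obtained by replacing the AIRM inner product with its $\Gamma$-weighted version; this metric keeps $\Mfd$ a simply connected, complete, nonpositively curved manifold, so Fletcher's existence-and-uniqueness result applies and yields a unique minimizer $\hat\theta(\Gamma)$ depending smoothly on $\Gamma$. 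The joint optimum is then characterized by the coupled system $\theta = \hat\theta(\Gamma)$, $\Gamma = S_n(\theta)$, and --- unlike the Euclidean specialization, where the estimator decouples (GLS reduces to OLS, giving a manifestly unique $(\hat\theta, \hat\Gamma)$) --- here the geometric parameters enter the $\Gamma$-weighting nonlinearly and do not decouple. I would establish uniqueness either (a) by verifying that $(\theta, \Gamma) \mapsto \ell_n(\theta, \Gamma)$ is strictly geodesically convex in a product geometry combining the AIRM on the $\conn^\ast$-factor with the affine-invariant metric on the $\Gamma$-factor, extending the Euclidean quadratic case; or, failing a clean global statement, (b) by showing every stationary point of the concentrated objective is a strict local minimum and using the topology of the Hadamard parameter space to exclude multiple minima. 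I expect reconciling the concavity of $\log\det$ with the convexity furnished by nonpositive curvature to be the crux, and the lower eigenvalue control in the existence step to be the secondary technical hurdle.
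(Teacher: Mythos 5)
There is a genuine gap: your write-up is a plan rather than a proof, and the two steps you yourself identify as the crux are left unexecuted. For existence, coercivity of $\theta \mapsto \log\det S_n(\theta)$ does not follow from one residual norm blowing up --- the top eigenvalue of $S_n(\theta)$ diverging is compatible with the determinant staying bounded or even vanishing if the smallest eigenvalue collapses, and your appeal to ``the Hadamard structure'' is a gesture, not an argument (indeed, all fitted centers $\conn_i^\ast$ can escape to infinity along a common geodesic, driving every residual nearly parallel). For uniqueness you explicitly offer two alternative strategies, (a) strict geodesic convexity of the concentrated objective in a product geometry, and (b) a Morse-theoretic exclusion of multiple minima, and carry out neither; note that (a) is already false in the flat case in general (the concentrated objective $\log\det S_n(\theta)$ is not convex in $\theta$; Euclidean uniqueness comes from the GLS--OLS decoupling that you correctly observe fails on the manifold). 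So the central claim of the proposition, uniqueness, is not established by your argument.

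Part of the difficulty is that you are attacking a strictly harder problem than the one the paper actually proves. The paper's proof treats $\boldsymbol{\Gamma}$ as fixed: it defines the Mahalanobis metric $\langle\cdot,\cdot\rangle^{\Mah}$ by reweighting the AIRM inner product with $\boldsymbol{\Gamma}^{-1}$, observes that maximizing the likelihood \eqref{nonsph} is then exactly the OLS geodesic-regression problem in that metric, and cites \citet{fletcher} for existence and uniqueness; the covariance is afterwards estimated separately by the plug-in moment estimator \eqref{GammaEst}, which the paper does not claim is an MLE. Your ``fixed-$\Gamma$'' step is precisely the paper's entire proof. Everything beyond it --- the profiling of $\Gamma$, the coupled fixed-point system $\theta=\hat\theta(\Gamma)$, $\Gamma=S_n(\theta)$, and the joint uniqueness question --- addresses the full joint MLE, which is a legitimate and arguably more faithful reading of the proposition, but one for which you have not supplied a proof. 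Either restrict to the known-$\Gamma$ case and invoke Fletcher directly, or supply actual arguments for the coercivity lower bound and for joint uniqueness.
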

\begin{proof}
	Consider the affine invariant Riemannian metric $\langle , \rangle^{\Mah}$, defined on $\Mfd$ by requiring 
	\begin{equation}
		\langle \log_{\idconn}(\conn), \log_{\idconn}(\connaux) \rangle_{\idconn}^{\Mah} 
		= \Vect_{\idconn}(\log_{\idconn}(\conn))^T
		\boldsymbol\Gamma^{-1} \Vect_{\idconn}(\log_{\idconn}(\connaux))
	\end{equation}
	Then the MLE estimators of model (\ref{nonsph}) are the OLS estimates of $\langle, \rangle^{\Mah}$, whose existence and uniqueness follow from \citep{fletcher}.
\end{proof}
We call the family of metrics introduced in the previous proof \textit{Mahalanobis Affine Invariant Riemannian Metric} (MAIRM) and the likelihood above the \textit{Riemannian normal model}. 

% The question of estimation rises again. 
% The procedure we propose is similar to the previous one. 
% The $\conn_i^\ast$ still satisfy 
% \begin{equation}
% 	\conn_i^\ast = \Expe [\conn_i | \mathbf{X} = \mathbf{x}]
% \end{equation}
% hence the OLS predicted values $\hat{\conn}_i$ are consistent estimators of $\conn^\ast_i$ and 
% \begin{equation}
% 	\samCov = \frac{1}{n-1} \sum_i
% 	\Vect_{\hat{\conn}_i}(\log_{\hat{\conn}_i}(\conn_i))
% 	\Vect_{\hat{\conn}_i}(\log_{\hat{\conn}_i}(\conn_i))^T
% 	\label{GammaEst}
% \end{equation}
% is a consistent estimator of $\Gamma$. 
% 
% Thus, the MAIRM and AIRM predicted values are the same, but the confidence intervals are different. 

\subsection{Riemannian Generalization of Wilk's Lambda}
 
Let us now extend the concepts above to develop a multivariate test statistic that generalizes the classical Wilk's Lambda to the  
SPD manifold.
This generalization enables us to test for differences among multiple groups of connectomes while respecting the natural geometric structure of the data.

To introduce the statistic that generalizes Wilk's Lambda, let us set up some notation. In the manifold $\mathcal{M} = \operatorname{SPD}$, consider $\mathcal{M}$-valued random variables $C_{ l j }$, $l = 1, \ldots, g$, $j = 1, \ldots, n_l$.

The usual orthogonal decomposition that is the basis of ANOVA does not directly apply in this manifold setting, and we need to find a suitable substitute.

Let $\hat{C}$ denote the Fréchet sample mean of the whole sample (see Appendix 1 for details) and $\hat{C}_{ l }$ denote the Fréchet sample mean of the subsample $\lbrace C_{ l j } \rbrace_j$.

\begin{proposition}
	
\label{prop:test-statistic}
Let $v_l = \operatorname{Vec}_{ \hat{C}_{ l } }( \log_{ \hat{C}_{ l } } \hat{C} )$, $u_{l,j} = \operatorname{Vec}_{ \hat{C}_{ l } }( \log_{ \hat{C}_{ l } } C_{ l j } )$, and $w_{l,j} = v_l - u_{l,j}$. Define
\begin{align}
T &= \sum_{l=1}^g\sum_{j=1}^{n_l} w_{lj}w_{lj}^T \\
W &= \sum_{l=1}^g  \sum_{j=1}^{n_l} u_{lj}u_{lj}^T
\end{align}
If $T$ is positive definite, then the test statistic
\begin{equation}
\Lambda^\ast = \frac{|W|}{|T|}
\end{equation}
ranges between 0 and 1.
\end{proposition}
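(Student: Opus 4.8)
The plan is to reproduce the classical MANOVA identity $T = W + B$, where $B$ plays the role of a between-group scatter matrix, and then deduce the two bounds from determinant monotonicity under the Loewner order. Since $W$ and $T$ are both real symmetric matrices, once I know $0 \preceq W \preceq T$ with $T$ positive definite, the bounds $0 \le |W|/|T| \le 1$ follow from a routine eigenvalue argument.

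First I would expand
\[
T = \sum_{l=1}^g \sum_{j=1}^{n_l} (v_l - u_{lj})(v_l - u_{lj})^T = \sum_l n_l\, v_l v_l^T - \sum_l v_l \Big(\sum_j u_{lj}\Big)^T - \sum_l \Big(\sum_j u_{lj}\Big) v_l^T + W.
\]
The crux is to show that the cross terms vanish, i.e.\ that $\sum_j u_{lj} = 0$ for each $l$. This is where the choice of base point matters: both $v_l$ and the $u_{lj}$ are computed through the \emph{same} vectorization map $\operatorname{Vec}_{\hat{C}_l}$, which is linear, so $\sum_j u_{lj} = \operatorname{Vec}_{\hat{C}_l}\big(\sum_j \log_{\hat{C}_l} C_{lj}\big)$. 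The first-order optimality condition characterizing $\hat{C}_l$ as the Fréchet mean of $\{C_{lj}\}_j$ is precisely $\sum_j \log_{\hat{C}_l} C_{lj} = 0$, since the gradient of $p \mapsto \sum_j d(p, C_{lj})^2$ is $-2\sum_j \log_p C_{lj}$ and vanishes at the minimizer. Hence $\sum_j u_{lj} = 0$, the cross terms drop, and I obtain $T = W + B$ with $B := \sum_l n_l\, v_l v_l^T \succeq 0$.

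With the decomposition in hand, both $W = \sum_{lj} u_{lj} u_{lj}^T$ and $B$ are sums of rank-one positive semidefinite matrices, hence positive semidefinite, so $0 \preceq W \preceq W + B = T$. For the bounds, $|W| \ge 0$ and $|T| > 0$ give the lower bound immediately. For the upper bound I would conjugate: $T^{-1/2} W T^{-1/2} \preceq T^{-1/2} T T^{-1/2} = I$, so every eigenvalue of this symmetric matrix lies in $[0,1]$, whence $|W|/|T| = \det(T^{-1/2} W T^{-1/2}) \le 1$.

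I expect the only genuine obstacle to be the rigorous justification of the gradient identity $\sum_j \log_{\hat{C}_l} C_{lj} = 0$. On the SPD manifold with the affine-invariant metric, which is complete, simply connected, and of nonpositive curvature, the Fréchet mean of a finite sample is unique and lies in the interior of the domain of every relevant logarithm, so the Fréchet functional is differentiable at $\hat{C}_l$ and the first-order condition holds without boundary complications. I would invoke this standard property rather than reprove it, after which the remainder of the argument is the elementary linear algebra above.
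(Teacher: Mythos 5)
Your proof is correct and follows essentially the same route as the paper: the decomposition $T = W + B$ via the centering identity $\sum_j u_{lj} = 0$, followed by positive semidefiniteness and determinant monotonicity. You go further than the paper in two respects --- explicitly justifying the centering condition from the first-order optimality of the Fréchet mean, and spelling out the conjugation argument for $\abs{W}/\abs{T} \le 1$ --- but these are elaborations of the same argument, not a different approach.
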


\begin{proof}
Since the vectors $\lbrace u_{lj} \rbrace_j$ are centered (i.e., $\sum_{j=1}^{n_l} u_{lj} = 0$), we have
\begin{gather}
\sum_{j=1}^{n_l} w_{lj}w_{lj}^T = \sum_{j=1}^{n_l}(v_l - u_{lj})(v_l - u_{lj})^T \nonumber \\
= n_l v_l v_l^T + \sum_{j=1}^{n_l} u_{lj}u_{lj}^T \label{eq:anova-decomp}
\end{gather}
Adding over $l$ yields the decomposition
\begin{equation}
T = \sum_{l=1}^g n_lv_lv_l^T + W
\label{orth}
\end{equation}
Both matrices are positive semidefinite and equation \eqref{orth} shows $T\geq W$, hence $0 \leq \Lambda^\ast \leq 1$.
\end{proof}

\textbf{Remark.} Notice the difference between $w_{lj}$ and the vector $\operatorname{Vec}_{ \hat{C} }( \log_{ \hat{C} } C_{ l j } )$. In general, they are not the same, but in the Euclidean case they coincide, so the subsequent developments reduce to the traditional MANOVA in that case. 

We consider the null hypothesis $H_0: C_1^* = C_2^* = \cdots = C_g^*$, where $C_l^*$ denotes the population Fréchet mean of group $l$. This is equivalent to testing whether all groups share the same population mean in the manifold. 

\subsection{Asymptotic Distribution of the Test Statistic}

We now present the asymptotic distribution of our Riemannian ANOVA test statistic $\Lambda^*$ under this model.

\begin{theorem}[Asymptotic Distribution]
\label{thm:asymptotic}
Let $C_{lj} \sim p(C)$ for $l = 1, \ldots, g$ and $j = 1, \ldots, n_l$, where
\begin{equation}p(C) \propto \exp\left(-\frac{1}{2} \operatorname{Vec}_{C^*}(\log_{C^*}(C))^T \Gamma^{-1} \operatorname{Vec}_{C^*}(\log_{C^*}(C))\right)\end{equation}
is the Riemannian normal distribution on the manifold of symmetric positive definite matrices with population mean $C^*$ and covariance $\Gamma$ in the tangent space.

Under the null hypothesis $H_0: C^*_1 = C^*_2 = \cdots = C^*_g = C^*$
as $n = \sum_{l=1}^g n_l \to \infty$ with $n_l/n \to \pi_l \in (0,1)$:

\begin{equation}-n\log\Lambda^* \xrightarrow{d} \chi^2_{d(g-1)}\end{equation}

where $d = p(p+1)/2$ is the dimension of $\mathcal M$.
\end{theorem}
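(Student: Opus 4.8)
The plan is to reduce the problem to the classical MANOVA null distribution by passing to tangent-space coordinates at the common population mean $C^*$ and linearizing. The first step is purely algebraic and is already in hand: the identity $T = \sum_{l} n_l v_l v_l^T + W$ from Proposition \ref{prop:test-statistic} writes $T = B + W$ with $B := \sum_l n_l v_l v_l^T$. I would then expand the log-determinant, $-n\log\Lambda^* = n\log\frac{|T|}{|W|} = n\log|I + W^{-1}B| = n\operatorname{tr}(W^{-1}B) + O_P(n\|W^{-1}B\|^2)$. Since the $u_{lj}$ are the vectorized logarithms at the consistent base points $\hat{C}_l$, a law of large numbers together with the consistency argument of Proposition 1 gives $W/n \convPr \Gamma$, while $B = O_P(1)$ under $H_0$; hence $nW^{-1} \convPr \Gamma^{-1}$ and the quadratic remainder is $o_P(1)$. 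This leaves the reduction $-n\log\Lambda^* = \operatorname{tr}(\Gamma^{-1}B) + o_P(1) = \sum_{l} n_l\, v_l^T \Gamma^{-1} v_l + o_P(1)$, so everything comes down to the joint limiting law of the between-group displacements $v_l$.

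Next I would supply the distributional input. Writing $\xi_{lj} = \operatorname{Vec}_{C^*}(\log_{C^*} C_{lj})$ (i.i.d.\ with mean $0$ and covariance $\Gamma$ under the model), $a_l = \operatorname{Vec}_{C^*}(\log_{C^*}\hat{C}_l)$ and $a = \operatorname{Vec}_{C^*}(\log_{C^*}\hat{C})$, I would linearize the Fréchet estimating equations $\sum_j \operatorname{Vec}_{\hat{C}_l}(\log_{\hat{C}_l} C_{lj}) = 0$ (the exact centering $\sum_j u_{lj} = 0$ used in Proposition \ref{prop:test-statistic}) about $C^*$. This expresses $a_l$ as an average of the $\xi_{lj}$ up to $o_P(n_l^{-1/2})$, yielding the joint central limit theorem for $(a_1,\dots,a_g,a)$ available from the Fréchet-mean CLT \citep{kendall2, clt1}. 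A separate first-order expansion of the logarithm map, as both its base point $\hat{C}_l$ and its argument $\hat{C}$ approach $C^*$, gives $v_l = (a - a_l) + o_P(n^{-1/2})$, since to leading order the manifold is locally Euclidean and parallel transport contributes only at second order.

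With these inputs, $\sqrt{n_l}\,v_l$ is asymptotically a centered Gaussian contrast of the group means, and $\sum_l n_l v_l^T\Gamma^{-1} v_l$ becomes exactly the Euclidean between-groups quadratic form $\sum_l n_l(\hat{C}_l - \hat{C})^T\Gamma^{-1}(\hat{C}_l - \hat{C})$ read in normal coordinates. I would finish with the standard argument: after whitening by $\Gamma^{-1/2}$, this quadratic form equals $\|P z\|^2$ for $z \sim N(0, I_{gd})$ with $P$ the orthogonal projection annihilating the $d$-dimensional weighted-mean direction spanned by $\sqrt{\pi}\otimes I_d$, so its rank is $(g-1)d$ and the limit is $\chi^2_{d(g-1)}$. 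This is precisely where the Remark's observation — that $w_{lj}$ reduces to $\operatorname{Vec}_{\hat{C}}(\log_{\hat{C}} C_{lj})$ in the Euclidean case — makes the reduction to classical Wilks' Lambda exact.

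The main obstacle I anticipate is the curvature correction hidden in the Fréchet-mean linearization. The estimating-equation expansion produces the asymptotic covariance $H^{-1}\Gamma H^{-1}$ for $\sqrt{n_l}\,a_l$, where $H = \Expe[\operatorname{Hess}\tfrac12 d^2(\cdot, C)]$ is the expected Hessian of the half-squared geodesic distance at $C^*$; the clean limit $\sum_l n_l v_l^T\Gamma^{-1}v_l \xrightarrow{d}\chi^2_{d(g-1)}$ requires $H$ to act as the identity in the $\operatorname{Vec}$ coordinates. This holds exactly in the Euclidean case (recovering classical MANOVA) and to leading order when the distribution is concentrated, so the delicate part is to control these second-order curvature terms uniformly over the shrinking neighborhood of $C^*$ and to justify that they do not perturb the $\chi^2$ limit. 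Handling this carefully — rather than the routine log-determinant and projection computations — is where I would concentrate the effort.
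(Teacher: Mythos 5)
Your proposal is correct and follows essentially the same route as the paper: pass to tangent-space coordinates at $C^*$, invoke the Fr\'echet-mean CLT, control the change of base point from $C^*$ to $\hat{C}_l$ at first order (the paper does this via its Jacobi-field corollary rather than your ``locally Euclidean'' assertion), and reduce to the classical MANOVA limit via the decomposition $T = B + W$. The obstacle you flag at the end --- the $H^{-1}\Gamma H^{-1}$ sandwich from the estimating-equation linearization --- is precisely the step the paper handles explicitly, by computing $\Lambda = 2G_{C^*}$ and $\Sigma = 4G_{C^*}\Gamma G_{C^*}^T$ so that the metric factors cancel and the asymptotic covariance is $\Gamma$ itself; with that cancellation in hand your whitening-and-projection finish is the standard argument the paper leaves implicit.
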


\begin{proof}
The proof proceeds by establishing the asymptotic behavior of the components of $\Lambda^*$ and then deriving the limiting distribution. Recall from Proposition \ref{prop:test-statistic} that the test statistic is defined as:
\begin{equation}\Lambda^* = \frac{|W|}{|T|}\end{equation}
where $T = \sum_{l=1}^g \sum_{j=1}^{n_l} w_{lj}w_{lj}^T$ and $W = \sum_{l=1}^g \sum_{j=1}^{n_l} u_{lj}u_{lj}^T$.

We begin by applying the central limit theorem for Fréchet means \citep{clt4}. Define the chart $\phi: \text{SPD}(p) \to \mathbb{R}^d$ by $\phi(C) = \operatorname{Vec}_{C^*}(\log_{C^*}(C))$.

We first calculate the gradient $\Psi(u; \theta) := \text{grad}_\theta \rho_\phi^2(u, \theta)$, where $\rho_\phi^2(u, \theta) = \rho^2(\text{Exp}_{C^*}(u), \text{Exp}_{C^*}(\theta))$ and $\rho$ denotes the geodesic distance on the manifold. We have
\begin{equation}\Psi(u; \theta) = -2G_{C^*}(u - \theta) + O(\|\theta\|^2)\end{equation}
where $G_{C^*}$ is the matrix representation of the metric tensor at $C^*$ in vectorized coordinates, arising from the inner product $\langle V, W \rangle_{C^*} = \text{tr}((C^*)^{-1}V(C^*)^{-1}W)$.

Next, under $H_0$, with $u = \operatorname{Vec}_{C^*}(\log_{C^*}(Y)) \sim N(0, \Gamma)$, we have:
\begin{equation}\Sigma = \text{Cov}(\Psi(u; 0)) = \text{Cov}(-2G_{C^*}u) = 4G_{C^*}\Gamma G_{C^*}^T\end{equation}

We now compute the Hessian $\Lambda$. The Hessian of the squared geodesic distance at the minimum is:
\begin{equation}\Lambda_{ij} = E\left[\frac{\partial}{\partial \theta_j} \Psi_i(u; \theta)\Big|_{\theta=0}\right] = 2[G_{C^*}]_{ij}\end{equation}
That is, $\Lambda = 2G_{C^*}$.

Applying Theorem 2.3 of \citet{clt4}, we obtain:
\begin{equation}\sqrt{n}(\phi(\hat{C}) - \phi(C^*)) \xrightarrow{d} N(0, \Lambda^{-1}\Sigma(\Lambda^T)^{-1})\end{equation}
Substituting our computed values:
\begin{equation}\Lambda^{-1}\Sigma(\Lambda^T)^{-1} = (2G_{C^*})^{-1} \cdot 4G_{C^*}\Gamma G_{C^*}^T \cdot (2G_{C^*}^T)^{-1} = \Gamma\end{equation}
That is, the metric terms cancel, which establishes that:
\begin{align}
\sqrt{n} \, \operatorname{Vec}_{C^*}(\log_{C^*}(\hat{C})) &\xrightarrow{d} N(0, \Gamma)
\end{align}

With this result at hand, we now turn to the asymptotic behavior of tangent vectors. For the between-group deviation vectors:
\begin{equation}v_l = \operatorname{Vec}_{\hat{C}_l}(\log_{\hat{C}_l}(\hat{C}))\end{equation}
we proceed as follows.

First, using Corollary 
\ref{cor:log-difference}, with $A = \hat{C}_l$, $B = \hat{C}$, $C = C^*$, and noting that $d_R(\hat{C}_l, C^*) = O_p(n_l^{-1/2})$ and $d_R(\hat{C}, C^*) = O_p(n^{-1/2})$ from the convergence of Fr\'echet sample means, we obtain
\begin{equation}\log_{\hat{C}_l}(\hat{C}) = \Pi_{C^* \to \hat{C}_l}[\log_{C^*}(\hat{C}) - \log_{C^*}(\hat{C}_l)] + O_p(n^{-1/2})\end{equation}

By the central limit theorem for Fr\'echet means
we have
$\log_{C^*}(\hat{C}) = O_p(n^{-1/2})$ and
$\log_{C^*}(\hat{C}_l) =  O_p(n_l^{-1/2})$. Therefore, the difference becomes:
\begin{equation}\log_{C^*}(\hat{C}) - \log_{C^*}(\hat{C}_l) = 
O_p(n^{-1/2})\end{equation}

Combining all these results:
\begin{align}
	v_l = \operatorname{Vec}_{\hat{C}_l}(\log_{\hat{C}_l}(\hat{C})) &= \operatorname{Vec}_{C^*}\Pi_{C^* \to \hat{C}_l}(\log_{C^*}(\hat{C}) - \log_{C^*}(\hat{C}_l)) + O_p(n^{-1/2})\\ &= O_P(n^{-1/2}) + O_p(n^{-1/2}) = O_P(n^{-1/2})
\end{align}

We now consider the observation vectors:
\begin{equation}u_{lj} = \operatorname{Vec}_{\hat{C}_l}(\log_{\hat{C}_l}(C_{lj}))\end{equation}

To establish the stability under change of base point, we analyze how the vectorization changes from $\hat{C}_l$ to $C^*$. Using Corollary 
\ref{cor:log-difference}, we obtain the expansion:
\begin{equation}\log_{\hat{C}_l}(C_{lj}) = \Pi_{C^* \to \hat{C}_l}\left(\log_{C^*}(C_{lj}) - \log_{C^*}(C_l)\right) + O(n^{-1/2})\end{equation}

Therefore, combining these estimates:
\begin{equation}u_{lj} = \operatorname{Vec}_{C^*}(\log_{C^*}(C_{lj})) + O_p(n_l^{-1/2}) \end{equation}

Next, we examine the decomposition and matrix distributions. The decomposition $T = B + W$ holds where:
\begin{align}
B &= \sum_{l=1}^g n_l v_l v_l^T = O_p(1) \\
W &= \sum_{l=1}^g \sum_{j=1}^{n_l} u_{lj}u_{lj}^T
\end{align}

Due to the
asymptotic normality in the tangent space, we have an approximate Wishart distribution for $W$. More precisely,
the argument above gives:
\begin{equation}u_{lj} = z_{lj} + \varepsilon_{lj}\end{equation}
where $z_{lj} \sim N(0, \Gamma)$ exactly and $\varepsilon_{lj} = O_p(n_l^{-1/2})$ represents the error from using the estimated group mean. This leads to the decomposition:
\begin{equation}W = \sum_{l=1}^g \sum_{j=1}^{n_l} u_{lj}u_{lj}^T = Z + E + R\end{equation}
where $Z \sim \mathcal{W}_d(n - g, \Gamma)$ (the degrees of freedom are due to the centering constraints $\sum_{j=1}^{n_l} u_{lj} = 0$) is the exact Wishart component, $E = O_p(n_l^{-1/2})$ represents cross-terms between true and error components, and $R = O_p(n_l^{-1/4})$ is the quadratic error term.

The rest of the argument proceeds exactly as in Euclidean MANOVA \citep{anderson}.
\end{proof}

\subsection{Comparison to Fréchet ANOVA}

The Fréchet ANOVA test \citep{muller} provides a distribution-free alternative for testing equality of population Fréchet means and variances in general metric spaces. We now establish the precise relationship between Fréchet ANOVA and our Riemannian ANOVA test under the Riemannian normal model.

The Fréchet ANOVA test statistic takes the form:
\begin{equation}T_n = \frac{n U_n }{\sum_{l=1}^{g}\frac{\lambda_{l,n}}{ \hat{\sigma}_l^2}} + \frac{nF_n^2}{\sum_{l=1}^{g}\lambda_{l,n}^2\hat{\sigma}_l^2}\end{equation}
where $\lambda_l = n_l/n$ and $F_n = \hat{V}_p - \sum_{l=1}^{g}\lambda_l \hat{V}_l$ measures between-group variation, $U_n = \sum_{j < l} \frac{\lambda_j\lambda_l}{\hat{\sigma}_j^2 \hat{\sigma}_l^2} (\hat{V}_j-\hat{V}_l)^2$ measures variance heterogeneity, $\hat{V}_l = \frac{1}{n_l}\sum_{j=1}^{n_l} d^2(C_{lj}, \hat{C}_l)$ is the sample Fréchet variance for group $l$, and $\hat{V}_p = \sum_{l=1}^{g}\lambda_l \frac{1}{n_l}\sum_{j=1}^{n_l} d^2(C_{lj}, \hat{C})$ is the overall sample Fréchet variance. Under $H_0$: $T_n \xrightarrow{d} \chi^2_{g-1}$ as $n \to \infty$.

% When the data follows our Riemannian normal model with equal covariances $\Gamma_1 = \cdots = \Gamma_g = \Gamma$, the Fréchet ANOVA components simplify considerably.

\begin{proposition}[Fréchet ANOVA Non-centrality Under Normality]\label{prop:4}
Under the Riemannian normal model with equal covariances and alternative hypothesis $H_1$, the Fréchet ANOVA test statistic follows asymptotically:
\begin{equation}T_n \xrightarrow{d} \chi^2_{g-1}(\delta_{\text{Fréchet}})\end{equation}
where the non-centrality parameter is:
\begin{equation}\delta_{\text{Fréchet}} = \frac{n \left(\sum_{l=1}^g \lambda_l d^2(C_l^*, C^*)\right)^2}{\operatorname{tr}(\Gamma) \sum_{l=1}^g \lambda_l^2}\end{equation}
with $C^*$ denoting the overall population Fréchet mean.
\end{proposition}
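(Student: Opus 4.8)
The plan is to track each ingredient of $T_n$ under the equal-covariance Riemannian normal model, separating the variance-heterogeneity term built from $U_n$ from the between-group term built from $F_n$, and to show that the entire non-centrality enters through the latter. Since $\operatorname{Vec}_{C_l^*}$ is a linear isometry, $d^2(Y_{lj}, C_l^*) = \norm{\operatorname{Vec}_{C_l^*}(\log_{C_l^*}(Y_{lj}))}^2$ with the vectorized score distributed $N(0,\Gamma)$; hence every group shares the same population Fréchet variance $\operatorname{tr}(\Gamma)$, and the normalizing constants $\hat\sigma_l^2$ converge to the common limit fixed by the Gaussian model. Because the population variances coincide across groups, $\hat V_j - \hat V_l \convPr 0$ even under $H_1$, so the differences that drive $U_n$ carry no mean shift and this term retains exactly its central null contribution.

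The crux is the population limit of the between-group contrast $F_n = \hat V_p - \sum_{l}\lambda_l \hat V_l$. I would obtain it from a manifold analogue of the parallel-axis identity: expanding the squared geodesic distance about the group mean gives
\[
d^2(Y_{lj}, C^*) = d^2(Y_{lj}, C_l^*) - 2\langle \log_{C_l^*}(Y_{lj}), \log_{C_l^*}(C^*)\rangle_{C_l^*} + d^2(C_l^*, C^*) + O(d^3(C_l^*, C^*)).
\]
Taking expectations and invoking the first-order optimality of the Fréchet mean, $\Expe[\log_{C_l^*}(Y_{lj})] = 0$, the cross term drops, so $\Expe[d^2(Y_{lj}, C^*)] = \operatorname{tr}(\Gamma) + d^2(C_l^*, C^*) + O(d^3(C_l^*, C^*))$ and consequently $F_n \convPr \sum_{l} \lambda_l d^2(C_l^*, C^*)$ to leading order. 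This is the step I expect to be the main obstacle: it requires controlling the curvature-dependent remainder in the SPD geometry and showing it is of smaller order than the quadratic signal, whereas the exact Euclidean decomposition underlying classical MANOVA holds only approximately here, and the whole identification of $\delta$ rests on this expansion.

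I would then pass to a sequence of contiguous local alternatives calibrated so that $\sum_l \lambda_l d^2(C_l^*, C^*) = O(n^{-1/2})$, equivalently $d(C_l^*, C^*) = O(n^{-1/4})$. This is precisely the scale at which the between-group term is of constant order: at the classical $n^{-1/2}$ separation it would vanish and at fixed separation it would diverge, so the Fréchet statistic only registers departures at the coarser $n^{-1/4}$ rate, a fact the later power comparison exploits. On this scale the fluctuations of $F_n$ about its mean are of smaller order than the signal, so the between-group term carries the asymptotic location
\[
\frac{n F_n^2}{\sum_l \lambda_l^2 \hat\sigma_l^2} \longrightarrow \frac{n\left(\sum_{l} \lambda_l d^2(C_l^*, C^*)\right)^2}{\operatorname{tr}(\Gamma)\sum_l \lambda_l^2} = \delta_{\text{Fréchet}},
\]
which isolates the single scalar direction through which Fréchet ANOVA detects mean differences.

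Finally I would assemble the two pieces: the variance-heterogeneity term contributes a central $\chi^2_{g-1}$ and the between-group term contributes the shift $\delta_{\text{Fréchet}}$. To promote this location shift into a bona fide non-centrality parameter I would invoke the null limit $T_n \xrightarrow{d} \chi^2_{g-1}$ together with a contiguity argument (Le Cam's third lemma): a statistic converging to $\chi^2_{g-1}$ under $H_0$ converges to $\chi^2_{g-1}(\delta)$ under the contiguous alternative, with $\delta$ read off as the limiting mean shift computed above. Verifying that this transfer is genuine rather than a mere moment match along the (nearly degenerate) between-group direction is the one remaining technical point; granting it yields $T_n \xrightarrow{d} \chi^2_{g-1}(\delta_{\text{Fréchet}})$.
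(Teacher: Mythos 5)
Your proposal is correct and follows essentially the same route as the paper: establish that each group's population Fréchet variance equals $\operatorname{tr}(\Gamma)$ so that the $U_n$ term stays central, use the parallel-axis decomposition of $\mathbb{E}[d^2(Y_{lj}, C^*)]$ to identify the population limit $F \to \sum_l \lambda_l d^2(C_l^*, C^*)$, and read off the non-centrality from the $nF_n^2$ term. The one substantive difference is the local-alternative scaling: the paper takes $C_l^* = \exp_{C^*}(n^{-1/2}H_l)$, under which $F = O(n^{-1})$ and the displayed non-centrality is not actually $O(1)$, whereas your $d(C_l^*, C^*) = O(n^{-1/4})$ calibration is the one that makes $\delta_{\text{Fr\'echet}}$ converge to a finite nonzero limit and is consistent with the $O(\theta^4)$ behavior exploited in the subsequent power comparison; your explicit appeals to the curvature remainder in the expansion and to Le Cam's third lemma also supply justification the paper leaves implicit.
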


\begin{proof}
Suppose $C_{lj} \sim p(C_l^*)$ where
the connectomes follow a Riemannian normal distribution on $\mathrm{SPD}(p)$ with population mean $C_l^*$ and covariance $\Gamma$ in the tangent space (as defined in Section 2.3).

Under the equal covariance assumption, the population Fréchet variance for each group is:
\begin{equation}V_l = \mathbb{E}[d^2(C_{lj}, C_l^*)] = \operatorname{tr}(\Gamma)\end{equation}
for all $l = 1, \ldots, g$, since $\log_{C_l^*}(C_{lj}) \sim \mathcal{N}(0, \Gamma)$ in the vectorized tangent space.

The overall population Fréchet variance admits the decomposition:
\begin{equation}V_p = \sum_{l=1}^g \lambda_l \mathbb{E}[d^2(C_{lj}, C^*)] = \sum_{l=1}^g \lambda_l \left(V_l + d^2(C_l^*, C^*)\right),\end{equation}
(similar to equation \eqref{eq:anova-decomp}). This yields:
\begin{equation}V_p = \operatorname{tr}(\Gamma) + \sum_{l=1}^g \lambda_l d^2(C_l^*, C^*).\end{equation}

Consequently, the scale component of the Fréchet ANOVA statistic vanishes:
\begin{equation}U = \sum_{j < l} \frac{\lambda_j\lambda_l}{\sigma_j^2 \sigma_l^2} (V_j-V_l)^2 = 0,\end{equation}
since $V_j = V_l = \operatorname{tr}(\Gamma)$ for all $j, l$. And the location component at the population level simplifies to:
\begin{equation}F = V_p - \sum_{l=1}^g \lambda_l V_l = \sum_{l=1}^g \lambda_l d^2(C_l^*, C^*),\end{equation}
representing the weighted sum of squared Riemannian distances from population group means to the overall population mean.

Under contiguous alternatives with $C_l^* = \exp_{C^*}(n^{-1/2}H_l)$ for symmetric matrices $H_l \in T_{C^*}\mathrm{SPD}(p)$, we have
\begin{equation}d^2(C_l^*, C^*) = \|n^{-1/2}H_l\|_F^2 = n^{-1}\|H_l\|_F^2.\end{equation}
Thus, the location component becomes
\begin{equation}F = n^{-1}\sum_{l=1}^g \lambda_l \|H_l\|_F^2.\end{equation}
The asymptotic distribution of the test statistic $T_n = nF_n^2/\hat{\sigma}^2$ then follows from the chi-squared approximation, where $\hat{\sigma}^2 \to \operatorname{tr}(\Gamma)$, yielding
\begin{equation}T_n \xrightarrow{d} \chi^2_{g-1}\left(\frac{n \left(\sum_{l=1}^g \lambda_l \|H_l\|_F^2\right)^2}{\operatorname{tr}(\Gamma) \sum_{l=1}^g \lambda_l^2}\right),\end{equation}
which gives the stated non-centrality parameter.
\end{proof}

\begin{proposition}[Non-centrality under Alternative Hypothesis]
\label{prop:noncentrality}
Under the alternative hypothesis $H_1$ with population Fréchet means $C_1^*, \ldots, C_g^* \in \mathrm{SPD}(p)$ where at least two differ, let $C^*$ denote the overall population Fréchet mean and define the population tangent vectors
\begin{equation}v_l^* = \operatorname{Vec}_{C^*}(\log_{C^*}(C_l^*))\end{equation}
for $l = 1, \ldots, g$. Then, as $n \to \infty$ with $n_l/n \to \pi_l > 0$:
\begin{enumerate}
\item The test statistic satisfies
\begin{equation}-n\log\Lambda^* \xrightarrow{d} \chi^2_{d(g-1)}(\delta)\end{equation}
where $d = p(p+1)/2$ and the non-centrality parameter is
\begin{equation}\delta = \lim_{n \to \infty} n \cdot \mathrm{tr}(W^{*-1}B^*) = \lim_{n \to \infty} n \sum_{l=1}^g \pi_l (v_l^*)^T (W^*)^{-1} v_l^*\end{equation}
with $B^* = \sum_{l=1}^g \pi_l v_l^* (v_l^*)^T$ and $W^*$ the population within-group covariance matrix in the tangent space. This represents the sum of squared Mahalanobis distances of group means from the overall mean in the tangent space.

\item The power function at significance level $\alpha$ is
\begin{equation}\beta_n = P\left(-n\log\Lambda^* > \chi^2_{d(g-1), 1-\alpha} \mid H_1\right) \to 1\end{equation}
as $n \to \infty$.
\end{enumerate}
\end{proposition}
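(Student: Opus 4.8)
The plan is to reduce both claims to the corresponding statements for classical MANOVA in the tangent space $T_{C^*}\Mfd$, exploiting the fact that the machinery built in the proof of Theorem \ref{thm:asymptotic} linearizes every manifold operation to leading order. First I would rewrite the statistic in its classical form $-n\log\Lambda^* = n\log|T| - n\log|W| = n\log|I + W^{-1}B|$, where $B = \sum_l n_l v_l v_l^T$ and $T = B + W$ by the decomposition \eqref{orth} (I use the normalization $-n\log\Lambda^*$ of Theorem \ref{thm:asymptotic}). The stationarity condition for the overall Fréchet mean, $\sum_l \pi_l \log_{C^*}(C_l^*) = o(1)$, yields the single linear constraint $\sum_l \pi_l v_l^* = o(1)$, which is what forces $g-1$ (rather than $g$) effective between-group directions and hence the $d(g-1)$ degrees of freedom.

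The second step is to pin down the two matrix ingredients. For the within-group term, the decomposition $W = Z + E + R$ from the proof of Theorem \ref{thm:asymptotic}, with $Z \sim \mathcal{W}_d(n-g, \Gamma)$, gives the law of large numbers $W/n \convPr W^* = \Gamma$. For the between-group term I would expand each $v_l$ via Corollary \ref{cor:log-difference} exactly as in the null case, but now retaining the population offset: $\hat C \convPr C^*$ while $\hat C_l \convPr C_l^*$, so that $\sqrt{n_l}\,v_l$ splits into a deterministic part converging to the (rescaled) $v_l^*$ and a Gaussian fluctuation from the central limit theorem for Fréchet means applied to $\hat C_l$ and to the shared $\hat C$. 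The shared overall mean couples the groups and reproduces the Euclidean between-group covariance structure, so that $B \xrightarrow{d} \mathcal{W}_d(g-1, W^*, \Omega)$ with non-centrality matrix $\Omega = \lim_n \sum_l n_l v_l^*(v_l^*)^T = \lim_n n B^*$, the rank reduction to $g-1$ being enforced by the constraint above.

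With these in hand, part (1) follows by linearizing the log-determinant. Since $W^{-1}B = O_p(n^{-1})$, the expansion $\log|I+M| = \operatorname{tr}(M) - \tfrac12\operatorname{tr}(M^2) + \cdots$ gives $-n\log\Lambda^* = \operatorname{tr}((W/n)^{-1}B) + o_p(1)$, and the continuous mapping theorem yields the limit $\operatorname{tr}(W^{*-1}B_\infty)$ with $B_\infty \sim \mathcal{W}_d(g-1, W^*, \Omega)$. The standard identity that $\operatorname{tr}(W^{*-1}B_\infty)$ is distributed as $\chi^2_{d(g-1)}(\operatorname{tr}(W^{*-1}\Omega))$ then identifies $\delta = \operatorname{tr}(W^{*-1}\Omega) = \lim_n n\operatorname{tr}(W^{*-1}B^*) = \lim_n n\sum_l \pi_l (v_l^*)^T (W^*)^{-1}v_l^*$. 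For part (2) I would instead argue under a fixed (non-contiguous) alternative: the deterministic part of $v_l$ now converges to a fixed nonzero $v_l^*$, so $W^{-1}B \convPr (W^*)^{-1}B^*$ with $B^* = \sum_l \pi_l v_l^*(v_l^*)^T \neq 0$, giving $-n\log\Lambda^* = n\log|I + (W^*)^{-1}B^*| + o_p(n)$, which diverges to $+\infty$ in probability at linear rate; since the critical value $\chi^2_{d(g-1),1-\alpha}$ is fixed, $\beta_n \to 1$.

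The hard part will be the second step: justifying that the Riemannian corrections genuinely vanish in the limit and that the joint fluctuations of $\{\sqrt{n_l}\,v_l\}$ match the Euclidean between-group Wishart. Concretely, one must show that the parallel-transport factors $\Pi_{C^*\to\hat C_l}$ and the curvature remainders in Corollary \ref{cor:log-difference} contribute only $o_p(1)$ to $\sqrt{n_l}\,v_l$, that the centering through the shared $\hat C$ reduces the rank of the limiting non-centrality to $g-1$ via $\sum_l \pi_l v_l^* = o(1)$, and that the bias from using the estimated group means $\hat C_l$ in place of $C_l^*$ does not perturb $\Omega$ at order $n^{-1}$. Once this reduction is secured, the determinant expansion and the Wishart-to-chi-squared identity are routine, exactly as in the Euclidean case invoked at the end of the proof of Theorem \ref{thm:asymptotic}.
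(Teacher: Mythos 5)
Your proposal follows the same skeleton as the paper's proof---decompose $T = B + W$, show $B/n \convPr B^*$ and $W/n \convPr W^*$, expand $\log\abs{I + \tilde{W}^{-1}\tilde{B}}$ to extract the leading term $n\operatorname{tr}(W^{*-1}B^*)$, and obtain consistency from the linear growth of the non-centrality---but you supply several steps the paper leaves implicit, and these are worth noting. The paper stops at identifying the leading term and never actually establishes the $\chi^2_{d(g-1)}(\delta)$ limit law; your route through the non-central Wishart limit $B \xrightarrow{d} \mathcal{W}_d(g-1, W^*, \Omega)$ together with the identity $\operatorname{tr}(W^{*-1}B_\infty) \sim \chi^2_{d(g-1)}(\operatorname{tr}(W^{*-1}\Omega))$ is what delivers the stated distribution, and your appeal to the Fr\'echet stationarity condition $\sum_l \pi_l v_l^* = o(1)$ is the only place the $g-1$ (rather than $g$) degrees-of-freedom count is actually justified. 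You are also more careful in separating the two asymptotic regimes: the non-central chi-squared limit in part (1) is coherent only under contiguous alternatives (otherwise $\lim_n n\operatorname{tr}(W^{*-1}B^*) = \infty$ and $W^{-1}B = O_p(1)$, so truncating the log-determinant expansion at the trace term is not a valid approximation), whereas consistency in part (2) is the fixed-alternative statement; the paper runs both claims ($\Lambda^* \convPr \lambda^* < 1$ and the first-order trace expansion) in the same breath without distinguishing them. The ``hard part'' you flag---showing that the parallel-transport factors and curvature remainders from Corollary \ref{cor:log-difference} contribute only $o_p(1)$ to $\sqrt{n_l}\,v_l$ so that the joint between-group fluctuations match the Euclidean Wishart---is indeed the substantive open step, and the paper does not address it either; both arguments are sketches at that point, but yours identifies where the real work lies.
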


\begin{proof}
Let us analyze the between-group scatter under $H_1$. The between-group scatter matrix is
\begin{equation}B = \sum_{l=1}^g n_l \hat{v}_l \hat{v}_l^T\end{equation}

Since $\hat{v}_l \xrightarrow{p} v_l^* \neq 0$ for at least some $l$ under $H_1$, we have
\begin{equation}\frac{B}{n} = \sum_{l=1}^g \frac{n_l}{n} \hat{v}_l \hat{v}_l^T \xrightarrow{p} \sum_{l=1}^g \pi_l v_l^* (v_l^*)^T = B^*\end{equation}

where $B^*$ is positive semi-definite with rank $\geq 1$. Thus $B = O_p(n)$.

For the within-group scatter, we have the matrix
\begin{equation}W = \sum_{l=1}^g \sum_{i=1}^{n_l} (v_{li} - \hat{v}_l)(v_{li} - \hat{v}_l)^T\end{equation}
where $v_{li} = \operatorname{Vec}_{\hat{C}}(\log_{\hat{C}}(C_{li}))$ is the tangent vector representation of the $i$-th observation from group $l$ at the overall sample mean $\hat{C}$. This converges as $W/n \xrightarrow{p} W^*$ where
\begin{equation}W^* = \sum_{l=1}^g \pi_l C_l\end{equation}

with $C_l = E[(v_{li} - v_l^*)(v_{li} - v_l^*)^T]$ being the population covariance within group $l$. Thus $W = O_p(n)$.

We now examine the limiting behavior of Wilks' Lambda. Since both $W$ and $B$ are $O_p(n)$, we can write
\begin{equation}\Lambda^* = \frac{|W|}{|W + B|} = \frac{|\tilde{W}|}{|\tilde{W} + \tilde{B}|}\end{equation}

where $\tilde{W} = W/n$ and $\tilde{B} = B/n$. As $n \to \infty$:
\begin{equation}\Lambda^* \xrightarrow{p} \frac{|W^*|}{|W^* + B^*|} = \lambda^* < 1\end{equation}

Since $B^* > 0$ under $H_1$, we have $\lambda^* < 1$ strictly.

This leads us to the asymptotic distribution. The log-likelihood ratio statistic
\begin{equation}-n\log\Lambda^* = -n\log\left(\frac{|\tilde{W}|}{|\tilde{W} + \tilde{B}|}\right)\end{equation}
where the factor of $n$ arises because we express the determinants in terms of $\tilde{W} = W/n$ and $\tilde{B} = B/n$, which converge to finite limits. Using the expansion $\log|I + A| \approx \mathrm{tr}(A) - \frac{1}{2}\mathrm{tr}(A^2) + O(\|A\|^3)$ for the matrix $A = \tilde{W}^{-1}\tilde{B}$, we obtain the leading term $n \cdot \mathrm{tr}(W^{*-1}B^*)$.

Finally, Since $\delta_n = O(n)$, the non-centrality parameter grows linearly with sample size. Therefore, for any fixed critical value $c_\alpha$:
\begin{equation}\beta_n = P(-n\log\Lambda^* > c_\alpha \mid H_1) \to 1\end{equation}

as $n \to \infty$.
\end{proof}

\begin{proposition}[Comparison of Non-centrality Parameters]
\label{prop:noncentrality_comparison}
Consider the Riemannian ANOVA and Fréchet ANOVA tests under the Riemannian normal model
with isotropic covariance $\Gamma = \sigma^2 I_{d^2}$
and balanced design ($n_l = n/g$ for all groups).
For a specific alternative where only group 1 deviates from the common mean by Riemannian distance $\theta > 0$ (i.e., $d(C_1^*, C^*) = \theta$ and $d(C_l^*, C^*) = 0$ for $l = 2, \ldots, g$), the non-centrality parameters simplify to:
\begin{align}
\delta_{\text{Novel}} &= \frac{n\theta^2}{g\sigma^2} = O(\theta^2)\\
\delta_{\text{Fréchet}} &= \frac{n\theta^4}{g\sigma^2 d^2} = O(\theta^4)
\end{align}
\end{proposition}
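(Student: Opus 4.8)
The plan is to treat this as a direct specialization of the two non-centrality formulas already derived: the expression for $\delta_{\text{Novel}}$ in Proposition \ref{prop:noncentrality} and the expression for $\delta_{\text{Fréchet}}$ in Proposition \ref{prop:4}. Each of the three hypotheses feeds into these formulas in a controlled way. The balanced design forces $\lambda_l = \pi_l = 1/g$, so that $\sum_{l=1}^g \lambda_l^2 = 1/g$. The isotropic assumption gives $W^* = \Gamma = \sigma^2 I$, hence $(W^*)^{-1} = \sigma^{-2} I$, while $\operatorname{tr}(\Gamma) = \sigma^2 d^2$ under the stated normalization. The single-group alternative fixes $d(C_1^*, C^*) = \theta$ and $d(C_l^*, C^*) = 0$ for $l \geq 2$, so that every group-level quantity reduces to one nonzero contribution from group $1$.

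For $\delta_{\text{Novel}}$ I would start from $\delta_{\text{Novel}} = \lim_n n \sum_l \pi_l (v_l^*)^T (W^*)^{-1} v_l^*$ and use that $\operatorname{Vec}_{C^*}$ is a linear isometry (recorded in Section 2.1), so $\|v_l^*\|^2 = \|\log_{C^*}(C_l^*)\|_{C^*}^2 = d^2(C_l^*, C^*)$. Under isotropy the Mahalanobis form collapses to $(v_l^*)^T (W^*)^{-1} v_l^* = \sigma^{-2} d^2(C_l^*, C^*)$, and the single-deviation alternative leaves only the $l=1$ term. Substituting $\pi_1 = 1/g$ and $d^2(C_1^*, C^*) = \theta^2$ yields $\delta_{\text{Novel}} = n\theta^2/(g\sigma^2) = O(\theta^2)$. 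The structural point is that the Novel statistic measures separation through the tangent vectors $v_l^*$ themselves, which are \emph{linear} in the logarithm map, so the deviation enters at the level of $\|v_1^*\|^2 = \theta^2$.

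For $\delta_{\text{Fréchet}}$ I would substitute directly into $\delta_{\text{Fréchet}} = n\big(\sum_l \lambda_l d^2(C_l^*, C^*)\big)^2 / \big(\operatorname{tr}(\Gamma)\sum_l \lambda_l^2\big)$. The single-deviation alternative gives $\sum_l \lambda_l d^2(C_l^*, C^*) = \theta^2/g$, and this already-squared distance is squared again in the numerator, producing $\theta^4/g^2$; dividing by $\operatorname{tr}(\Gamma)\sum_l \lambda_l^2 = \sigma^2 d^2 \cdot (1/g)$ gives $\delta_{\text{Fréchet}} = n\theta^4/(g\sigma^2 d^2) = O(\theta^4)$. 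The contrast is now transparent: Fréchet ANOVA sees the group means only through their squared distances $d^2(C_l^*, C^*)$ and builds its location statistic from a \emph{difference of Fréchet variances}, so the same separation enters at order $\theta^4$ rather than $\theta^2$.

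The main obstacle is not the arithmetic but justifying the reduction $W^* = \sigma^2 I$ at the overall mean $C^*$. The covariance $\Gamma$ lives in each group's own tangent space, whereas $W^*$ is the within-group covariance expressed at $C^*$; I would argue that parallel transport along geodesics is a linear isometry for the affine-invariant metric, so an isotropic $\Gamma = \sigma^2 I$ is carried to $\sigma^2 I$ at $C^*$ to the required order, keeping $(W^*)^{-1} = \sigma^{-2} I$ exact at leading order. A second, minor point is that the idealized alternative with $d(C_l^*, C^*)=0$ for $l \geq 2$ is not exactly compatible with $C^*$ being the Fréchet mean (which would impose $\sum_l \pi_l v_l^* = 0$); I would note that this is a first-order, local-alternative device, so the discrepancy perturbs $C^*$ only at higher order in $\theta$ and does not affect the leading $\theta^2$ versus $\theta^4$ rates, which are all the proposition asserts.
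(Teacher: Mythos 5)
Your proposal is correct and follows essentially the same route as the paper: both arguments are direct substitutions of the balanced-design, isotropic-covariance, single-deviation hypotheses into the non-centrality formulas of Propositions \ref{prop:4} and \ref{prop:noncentrality}, using $\|v_1^*\|^2 = d^2(C_1^*,C^*) = \theta^2$ and $\operatorname{tr}(\Gamma)\sum_l\lambda_l^2 = \sigma^2 d^2/g$. The only difference is that you explicitly flag and sketch justifications for two points the paper silently assumes --- the identification $W^* = \Gamma = \sigma^2 I$ at the base point $C^*$ via the isometry of parallel transport, and the higher-order incompatibility between the idealized alternative and $C^*$ being the Fréchet mean --- which is a more careful rendering of the same argument rather than a different one.
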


\begin{proof}
We establish the non-centrality expressions for both tests under the specified conditions.

The general expression for the non-centrality parameter can be rewritten as:
\begin{equation}\delta_{\text{Novel}} = n \sum_{l=1}^g \lambda_l \|v_l^*\|_{\Gamma^{-1}}^2\end{equation}
where $v_l^* = \operatorname{Vec}(\log_{C^*}(C_l^*))$ and $\lambda_l = 1/g$ for balanced designs.

For the specific alternative with only group 1 deviating:
\begin{equation}\|v_1^*\|_{\Gamma^{-1}}^2 = \frac{1}{\sigma^2}\|v_1^*\|_F^2 = \frac{\theta^2}{\sigma^2}\end{equation}

Therefore:
\begin{equation}\delta_{\text{Novel}}  = \frac{n\theta^2}{g\sigma^2}\end{equation}

On the other hand, the Fréchet ANOVA non-centrality parameter is:
\begin{equation}\delta_{\text{Fréchet}} = \frac{n \left(\sum_{l=1}^g \lambda_l d^2(C_l^*, C^*)\right)^2}{\operatorname{tr}(\Gamma) \sum_{l=1}^g \lambda_l^2}\end{equation}

Substituting these values:
\begin{equation}\delta_{\text{Fréchet}} = \frac{n \cdot (\theta^2/g)^2}{\sigma^2 d^2 \cdot (1/g)} = \frac{n\theta^4}{g\sigma^2 d^2}\end{equation}
\end{proof}

\begin{remark}[Practical Implications]
The quadratic versus quartic dependence on $\theta$ has significant practical consequences. For detecting small location shifts, the Riemannian ANOVA test requires substantially smaller sample sizes to achieve the same power as Fréchet ANOVA. However, Fréchet ANOVA retains advantages in robustness to model misspecification and the ability to detect variance heterogeneity through its scale component $U_n$. The choice between tests should consider both the expected effect size and confidence in the Riemannian normality assumption.
\end{remark}

\section{Conclusion}

In this paper, we have shown that while Fr\'echet ANOVA is a widely applicable test, we can sacrifice some of its generality to obtain a test that in certain situations can be more powerful. We have proposed one such test by leveraging properties of certain Riemannian metrics on the SPD manifold to carry over the sum of squares decomposition at the heart of classical multivariate ANOVA. We have also identified distributional assumptions, namely the Riemannian normal model, under which we can derive the asymptotic distribution of the novel test statistic. Substantial technical work was done in this direction, consisting of analyzing the behavior of certain Jacobi fields to bound the possible effect of curvature on the classical decomposition of observations from the Fr\'echet mean. In turn, we have used that asymptotic distribution to show that the novel test is more powerful than Fr\'echet ANOVA in certain regions of the parameter space. In this way, we have shown that the study of the statistical properties of manifold-valued samples still offers plenty of new research directions to explore, and that the geometry of the sample space endows statisticians with a varied and powerful toolkit.

%%%%%%%%%%%%%%%%%%%%%%%%%%%%%%%%%%%%%%%%%%%%%%
%% Appendices                               %%
%%%%%%%%%%%%%%%%%%%%%%%%%%%%%%%%%%%%%%%%%%%%%%
\begin{appendix}
%\section{Technical Details and Additional Proofs}\label{app:proofs}
%\input{apendice1}

\section{Riemannian Geometry Background}\label{app:riemannian}
\subsection{Riemannian Geometry Review}
\label{app:jacobi-proof}

For a parcellation of size $p$, connectomes are symmetric, positive definite, $p \times p$ matrices. The set $\mathcal{M} = \mathrm{SPD} _{ p }$ is a convex cone, but it is not a linear subspace of the set of $p \times p$ matrices, $\mathrm{Mat} \left( p \right)$. They are an open subset of the linear subspace of $p \times p$
symmetric matrices, $\mathrm{Symm} _{ p }$. As such, they constitute a smooth manifold of dimension $d = \frac{ p \left( p + 1 \right) }{ 2 }$. We denote a generic point of $\mathcal{M}$ as $C$. The tangent space at any $C$, $T _{ C } \mathcal{M}$, is naturally isomorphic to $\mathrm{Symm} _{ p }$. 

Furthermore, $\mathcal{M}$ can be endowed with the structure of a Riemannian manifold by choosing a Riemannian metric.
As mentioned before, a popular choice is AIRM \citep{pennec}, which can be introduced in a couple of ways. 
On one hand, it is a Fisher information metric related to the fact that each $C \in \mathcal{M}$ is the set of parameters for a distribution, namely $N \left( 0 , C \right)$ \citep{lenglet}.
On the other hand, $\mathcal{M}$ is a homogeneous space of the Lie group $\mathrm{Mat} \left( p \right) ^{ \times }$ under the action given by $C \mapsto A C A^T $ for $A \in \mathrm{Mat} \left( p \right) ^{ \times }$. 
The left invariant metric on $\mathrm{Mat} \left( p \right) ^{ \times }$ descends to a metric on $\mathcal{M}$.  

AIRM has a prominent role in this area and it has several nice properties. It is a complete metric: geodesics stay on the manifold for infinite time (the boundary of $\mathcal{M}$, the set of semidefinite matrices, are at infinite distance in AIRM). Also, it has non-positive curvature. These two facts allow us to use Hadamard's theorem to conclude that for each $C$, the maps
$\exp _{ C } : \mathcal{M} \rightarrow T _{ C } \mathcal{M}$
and
$\log _{ C } : T _{ C } \mathcal{M} \rightarrow \mathcal{M}$
are global, mutually inverse diffeomorphisms.
AIRM introduces a geodesic distance $d \left( \cdot , \cdot \right)$ on $\mathcal{M}$ as well as a norm
$\norm{ \cdot } _{ C }$ on $T _{ C } \mathcal{M}$, which satisfy the relation
\begin{equation}\norm{ \log _{ C } \left( \Lambda \right) } _{ C } = d \left( C , \Lambda \right)\end{equation}
AIRM allows us to introduce several probabilistic features which we can use to perform statistical inference on them. First, it allows us to compute expected values. We cannot perform integration due to the lack of a linear structure. Instead, we use the notion of Fréchet mean: for a random connectome $C$,
\begin{equation}\mathbb{E} \left[ C \right] = \operatorname{argmin} _{ \Lambda } \norm{ \log _{ \Lambda } \left( C \right) } _{ \Lambda } ^{ 2 }\end{equation}
We can also introduce a notion of normal distribution, as follows. AIRM provides us with a linear isometry $\operatorname{Vec} _{ C } : T _{ C } \mathcal{M} \rightarrow \mathbb{R} ^{ p }$. For matrices $C ^{ \ast } \in \mathcal{M} , \Gamma \in \mathrm{SPD} _{ d }$, we say the random connectome $C$ has distribution $N \left( C ^{ \ast } , \Gamma \right)$ if the random vector $\operatorname{Vec} _{ C ^{ \ast } } \left( \log _{ C ^{ \ast } } \left( C \right) \right)$ has distribution $N \left( 0 , \Gamma \right)$.

\subsection{Technical Results}

The proof of Theorem \ref{thm:asymptotic} requires the following estimates. 

\begin{theorem}[Jacobi Field Quotient Identity]\label{thm:jacobi-field}
Let $M$ be a Riemannian manifold and $\gamma : [0,1] \to M$ be the geodesic $\gamma(s) = \exp_C(s\log_C(A))$ connecting $C$ to $A$. Consider the variation 
%$\sigma(s,t)$ 
$\sigma(s,t) = \exp_{\gamma(s)}(t\log(B))$
%with:
%\begin{itemize}
%\item $\sigma(0,t) = C$ for all $t$
%\item $\sigma(s,0) = \gamma(s)$
%\item $\sigma(1,1) = B$
%\end{itemize}
Let $J(s) = \frac{\partial \sigma(s,t)}{\partial t}\big|_{t=0}$ be the associated Jacobi field. Then for small $\varepsilon > 0$:
\begin{equation}\frac{\log_{\gamma(\varepsilon)}(B) - \Pi_{C \to \gamma(\varepsilon)}\left[\log_C(B)\right]}{\varepsilon} = \frac{D}{ds} J\bigg|_{s=\varepsilon} + O(\varepsilon)\end{equation}
where $\Pi_{C \to \gamma(\varepsilon)}$ denotes parallel transport along $\gamma$ from $C$ to $\gamma(\varepsilon)$.
\end{theorem}

\begin{proof}
We establish the result through a careful analysis of the Jacobi field evolution and its covariant derivatives.

Let $\{E_i(s)\}_{i=1}^n$ be a parallel orthonormal frame along $\gamma$, constructed by parallel transporting an orthonormal basis from $T_C M$ along the geodesic. Since the frame is parallel, we have $\frac{D}{ds} E_i = 0$ for all $i$ and $s \in [0,1]$.

Express the Jacobi field in this frame:
\begin{equation}J(s) = \sum_{i=1}^n J^i(s) E_i(s)\end{equation}
where $J^i(s) = \langle J(s), E_i(s) \rangle$ are the component functions.

Since the frame is parallel, the covariant derivatives of $J$ have a simple form:
\begin{equation}\frac{D}{ds} J = \sum_{i=1}^n \frac{dJ^i}{ds} E_i(s), \quad \frac{D^2}{ds^2} J = \sum_{i=1}^n \frac{d^2J^i}{ds^2} E_i(s)\end{equation}

The Jacobi equation $\frac{D^2}{ds^2} J + R(J, \dot{\gamma})\dot{\gamma} = 0$ becomes:
\begin{equation}\sum_{i=1}^n \frac{d^2J^i}{ds^2} E_i(s) + R(J(s), \dot{\gamma}(s))\dot{\gamma}(s) = 0\end{equation}

In the parallel frame, the curvature endomorphism $R(\cdot, \dot{\gamma})\dot{\gamma}$ has components:
\begin{equation}R(J, \dot{\gamma})\dot{\gamma} = \sum_{i,j=1}^n R_{ij} J^j E_i\end{equation}
where $R_{ij} = \langle R(E_j, \dot{\gamma})\dot{\gamma}, E_i \rangle$.

Therefore, the component equations are:
\begin{equation}\frac{d^2J^i}{ds^2} + \sum_{j=1}^n R_{ij}(s) J^j(s) = 0, \quad i = 1, \ldots, n\end{equation}

For each component function $J^i(s)$, we have the Taylor expansion around $s = 0$:
\begin{equation}J^i(s) = J^i(0) + s\frac{dJ^i}{ds}\bigg|_{s=0} + \frac{s^2}{2}\frac{d^2J^i}{ds^2}\bigg|_{s=0} + O(s^3)\end{equation}

From the Jacobi equation at $s = 0$:
\begin{equation}\frac{d^2J^i}{ds^2}\bigg|_{s=0} = -\sum_{j=1}^n R_{ij}(0) J^j(0)\end{equation}

Substituting:
\begin{equation}J^i(\varepsilon) = J^i(0) + \varepsilon\frac{dJ^i}{ds}\bigg|_{s=0} - \frac{\varepsilon^2}{2}\sum_{j=1}^n R_{ij}(0) J^j(0) + O(\varepsilon^3)\end{equation}

The parallel transport of $J(0) \in T_C M$ to $T_{\gamma(\varepsilon)} M$ is given by:
\begin{equation}\Pi_{C \to \gamma(\varepsilon)}[J(0)] = \sum_{i=1}^n J^i(0) E_i(\varepsilon)\end{equation}

This vector in $T_{\gamma(\varepsilon)} M$ has components $J^i(0)$ in the parallel frame at $\gamma(\varepsilon)$.

The Jacobi field at $\gamma(\varepsilon)$ is:
\begin{equation}J(\varepsilon) = \sum_{i=1}^n J^i(\varepsilon) E_i(\varepsilon)\end{equation}

Both $J(\varepsilon)$ and $\Pi_{C \to \gamma(\varepsilon)}[J(0)]$ are vectors in $T_{\gamma(\varepsilon)} M$. Their difference is:
\begin{equation}J(\varepsilon) - \Pi_{C \to \gamma(\varepsilon)}[J(0)] = \sum_{i=1}^n [J^i(\varepsilon) - J^i(0)] E_i(\varepsilon)\end{equation}

Substituting the expansion:
\begin{equation}J(\varepsilon) - \Pi_{C \to \gamma(\varepsilon)}[J(0)] = \varepsilon \sum_{i=1}^n \frac{dJ^i}{ds}\bigg|_{s=0} E_i(\varepsilon) - \frac{\varepsilon^2}{2}\sum_{i,j=1}^n R_{ij}(0) J^j(0) E_i(\varepsilon) + O(\varepsilon^3)\end{equation}

Note that $\sum_{i=1}^n \frac{dJ^i}{ds}\big|_{s=0} E_i(\varepsilon)$ is the parallel transport of $\frac{D}{ds} J\big|_{s=0}$ to $T_{\gamma(\varepsilon)} M$.

Dividing by $\varepsilon$:
\begin{equation}\frac{J(\varepsilon) - \Pi_{C \to \gamma(\varepsilon)}[J(0)]}{\varepsilon} = \sum_{i=1}^n \frac{dJ^i}{ds}\bigg|_{s=0} E_i(\varepsilon) - \frac{\varepsilon}{2}\sum_{i,j=1}^n R_{ij}(0) J^j(0) E_i(\varepsilon) + O(\varepsilon^2)\end{equation}

The covariant derivative $\frac{D}{ds} J$ along the geodesic has components $[\frac{D}{ds} J]^i(s) = \frac{dJ^i}{ds}(s)$.

From the Jacobi equation:
\begin{equation}\frac{d}{ds}\left[\frac{dJ^i}{ds}\right] = \frac{d^2J^i}{ds^2} = -\sum_{j=1}^n R_{ij}(s) J^j(s)\end{equation}

For small $\varepsilon$:
\begin{equation}[\frac{D}{ds} J]^i(\varepsilon) = [\frac{D}{ds} J]^i(0) - \varepsilon \sum_{j=1}^n R_{ij}(0) J^j(0) + O(\varepsilon^2)\end{equation}

Therefore, $\frac{D}{ds} J\big|_{s=\varepsilon}$ expressed in the frame at $\gamma(\varepsilon)$ is:
\begin{equation}\frac{D}{ds} J\big|_{s=\varepsilon} = \Pi_{C \to \gamma(\varepsilon)}[\frac{D}{ds} J\big|_{s=0}] - \varepsilon \Pi_{C \to \gamma(\varepsilon)}[R(J(0), \dot{\gamma}(0))\dot{\gamma}(0)] + O(\varepsilon^2)\end{equation}

From the quotient expression and the evolution of $\frac{D}{ds} J$:
\begin{equation}\Pi_{C \to \gamma(\varepsilon)}[\frac{D}{ds} J\big|_{s=0}] = \frac{D}{ds} J\big|_{s=\varepsilon} + \varepsilon \Pi_{C \to \gamma(\varepsilon)}[R(J(0), \dot{\gamma}(0))\dot{\gamma}(0)] + O(\varepsilon^2)\end{equation}

Substituting:
\begin{equation}\frac{J(\varepsilon) - \Pi_{C \to \gamma(\varepsilon)}[J(0)]}{\varepsilon} = \frac{D}{ds} J\big|_{s=\varepsilon} + \frac{\varepsilon}{2}\Pi_{C \to \gamma(\varepsilon)}[R(J(0), \dot{\gamma}(0))\dot{\gamma}(0)] + O(\varepsilon^2)\end{equation}

The curvature term $\frac{\varepsilon}{2}\Pi_{C \to \gamma(\varepsilon)}[R(J(0), \dot{\gamma}(0))\dot{\gamma}(0)]$ is of order $\varepsilon$, so:
\begin{equation}\frac{J(\varepsilon) - \Pi_{C \to \gamma(\varepsilon)}[J(0)]}{\varepsilon} = \frac{D}{ds} J\big|_{s=\varepsilon} + O(\varepsilon)\end{equation}

Finally, since $\gamma(\varepsilon) = \exp_C(\varepsilon \log_C(A))$ and $J(\varepsilon) = \log_{\gamma(\varepsilon)}(B)$ and $J(0) = \log_C(B)$:
\begin{equation}\frac{\log_{\gamma(\varepsilon)}(B) - \Pi_{C \to \gamma(\varepsilon)}[\log_C(B)]}{\varepsilon} = \frac{D}{ds} J\bigg|_{s=\varepsilon} + O(\varepsilon)\end{equation}
\end{proof}

\begin{corollary}[Logarithm Difference Formula]\label{cor:log-difference}
Let $C, A, B \in M$ and consider the geodesic $\gamma(s) = \exp_C(s\log_C(A))$. For small $\varepsilon > 0$, let $\gamma(\varepsilon)$ be a point on this geodesic close to $C$. Then:
\begin{equation}\frac{\log_{\gamma(\varepsilon)}(B) - \Pi_{C \to \gamma(\varepsilon)}[\log_C(B) - \log_C(A)] - \log_{\gamma(\varepsilon)}(A)}{\varepsilon} = \frac{D}{ds} \tilde{J}\bigg|_{s=\varepsilon} + O(\varepsilon)\end{equation}
where $\tilde{J}(s)$ is the Jacobi field along $\gamma$ associated to the variation $\sigma(s,t)$ defined by:
\begin{equation}\sigma(s,t) = \exp_{\gamma(s)}(t[\log_{\gamma(s)}(B) - \log_{\gamma(s)}(A)])\end{equation}

%\begin{itemize}
%\item $\tilde{J}(0) = \log_C(B) - \log_C(A)$
%\item $\tilde{J}(1) = 0$ (since $\sigma(1,t) = A$ for all $t$ in the variation)
%\end{itemize}
\end{corollary}

\begin{proof}
This variation satisfies:
$\sigma(s,0) = \gamma(s)$ for all $s$,
$\sigma(0,t) = \exp_C(t[\log_C(B) - \log_C(A)])$,
and $\sigma(1,t) = \exp_A(t \log_A(B))$ for all $t$ (since $\gamma(1) = A$ and $\log_A(A) = 0$).

The associated Jacobi field is:
\begin{equation}\tilde{J}(s) = \frac{\partial \sigma}{\partial t}\bigg|_{t=0} = \log_{\gamma(s)}(B) - \log_{\gamma(s)}(A)\end{equation}

Now we can decompose:
\begin{equation}\log_{\gamma(\varepsilon)}(B) = [\log_{\gamma(\varepsilon)}(B) - \log_{\gamma(\varepsilon)}(A)] + \log_{\gamma(\varepsilon)}(A)\end{equation}

The first term is $\tilde{J}(\varepsilon)$, which by Theorem \ref{thm:jacobi-field} equals:
\begin{equation}\tilde{J}(\varepsilon) = \Pi_{C \to \gamma(\varepsilon)}[\tilde{J}(0)] + \varepsilon \frac{D}{ds} \tilde{J}\big|_{s=\varepsilon} + O(\varepsilon^2)\end{equation}

Since $\tilde{J}(0) = \log_C(B) - \log_C(A)$:
\begin{equation}\log_{\gamma(\varepsilon)}(B) - \log_{\gamma(\varepsilon)}(A) = \Pi_{C \to \gamma(\varepsilon)}[\log_C(B) - \log_C(A)] + \varepsilon \frac{D}{ds} \tilde{J}\big|_{s=\varepsilon} + O(\varepsilon^2)\end{equation}

Rearranging and dividing by $\varepsilon$:
\begin{equation}\frac{\log_{\gamma(\varepsilon)}(B) - \Pi_{C \to \gamma(\varepsilon)}[\log_C(B) - \log_C(A)] - \log_{\gamma(\varepsilon)}(A)}{\varepsilon} = \frac{D}{ds} \tilde{J}\bigg|_{s=\varepsilon} + O(\varepsilon)\end{equation}
\end{proof}

%\begin{lemma}[Derivative of Logarithm Map with Moving Base Point]\label{lem:log-derivative}
%For a curve $\gamma(s)$ in the manifold of SPD matrices with velocity $\dot{\gamma}(s)$ and a fixed point $B$, the derivative of the logarithm map is:
%$$\frac{D}{ds}\bigg|_{s=0}\log_{\gamma(s)}(B) = -\nabla_{\dot{\gamma}(s)} \log_{\gamma(s)}(B)\bigg|_{s=0}$$
%where $\nabla$ is the Levi-Civita connection of the AIRM metric.
%\end{lemma}

%\begin{proof}
%This follows from the differential geometry of the logarithm map. The logarithm map $\log: \mathcal{M} \times \mathcal{M} \to T\mathcal{M}$ can be viewed as a map $(P, Q) \mapsto \log_P(Q) \in T_P\mathcal{M}$. When we vary the base point $P$ along a curve $\gamma(s)$ while keeping $Q = B$ fixed, the change in $\log_{\gamma(s)}(B)$ involves the intrinsic change due to moving the base point (captured by the connection) and the parallel transport of the vector as we move along $\gamma$. The formula $-\nabla_{\dot{\gamma}(0)} \log_{\gamma(0)}(B)$ encodes precisely this geometric change.
%\end{proof}

\begin{proposition}[AIRM Jacobi Field Velocity]\label{prop:airm-jacobi-velocity}
Let $(\mathcal{M}, g^{\mathrm{AIRM}})$ be the manifold of $n \times n$ symmetric positive definite matrices equipped with the Affine Invariant Riemannian Metric (AIRM). Let $C, A, B \in \mathcal{M}$ be three points, and let $\gamma: [0,1] \to \mathcal{M}$ be the geodesic from $C$ to $A$, given by:
\begin{equation}\gamma(s) = C^{1/2}(C^{-1/2}AC^{-1/2})^s C^{1/2}\end{equation}

Define the Jacobi field $J(s)$ along $\gamma$ arising from the variation:
\begin{equation}\sigma(s,t) = \exp_{\gamma(s)}(t[\log_{\gamma(s)}(B) - \log_{\gamma(s)}(A)])\end{equation}

Then 
\begin{equation}\frac{D}{ds}J \bigg|_{s = 0} = -\frac{1}{2}[\log_C(A), \log_C(B)] - \frac{1}{2}\{\log_C(A), \log_C(B)\}_C + \log_C(A)\end{equation}

where:
\begin{itemize}
\item $[X,Y] = XY - YX$ is the matrix commutator
\item $\{X,Y\}_C = C^{-1}XY + YX^TC^{-1}$ is the symmetrized product with the metric at $C$
\item $\log_P(Q) = P^{1/2}\log(P^{-1/2}QP^{-1/2})P^{1/2}$ is the Riemannian logarithm map
\end{itemize}
\end{proposition}

\begin{proof}
We denote $L_A = \log(C^{-1/2}AC^{-1/2})$ as the matrix logarithm, $L_B = \log(C^{-1/2}BC^{-1/2})$, and $M = C^{-1/2}AC^{-1/2}$ as the conjugated target matrix. Then $\log_C(A) = C^{1/2}L_A C^{1/2}$, $\log_C(B) = C^{1/2}L_B C^{1/2}$, $\gamma(s) = C^{1/2}M^s C^{1/2}$, and $\dot{\gamma}(s) = C^{1/2}L_A M^s C^{1/2}$.

From the definition of $\sigma$, 

\begin{equation}J(s) = \log_{\gamma(s)}(B) - \log_{\gamma(s)}(A)\end{equation}

%Rewrite 

%$$\sigma(s,t) = \gamma(s)^{1/2} N^t \gamma(s)^{1/2}$$

%where 

%$$N = \log_{\gamma(s)}(B) - \log_{\gamma(s)}(A) = \gamma(s)^{-1/2} (B - A) \gamma(s)^{-1/2}$$

%hence 

%$$J(s) = \gamma(s)^{1/2} \log(\gamma(s)^{-1/2}(B-A)\gamma(s)^{-1/2}) \gamma(s)^{1/2}$$ 

%\vspace{10cm}

%First, we compute $\frac{d}{ds}\log_{\gamma(s)}(A)$. Since $\gamma(s)$ is the geodesic from $C$ to $A$ with $\gamma(0) = C$ and $\gamma(1) = A$, the remaining distance along the geodesic from $\gamma(s)$ to $A$ is $(1-s)$ times the initial distance. By the properties of geodesics in Riemannian manifolds:
%$$\log_{\gamma(s)}(A) = (1-s)\dot\gamma(s)$$

%Therefore:
%$$\frac{d}{ds}\log_{\gamma(s)}(A) = -\log_C(A)$$

%Next, we compute $\frac{d}{ds}\bigg|_{s=0}\log_{\gamma(s)}(B)$ using Lemma \ref{lem:log-derivative}. 
For the AIRM metric at point $C \in \mathcal{M}$, the Levi-Civita connection is:
\begin{equation}\nabla_X Y = DY[X] - \frac{1}{2}(C^{-1}XY + YX^TC^{-1})\end{equation}

where $X, Y \in T_C\mathcal{M}$ are tangent vectors (symmetric matrices) and $DY[X]$ is the Euclidean directional derivative.

%Applying the lemma with $\dot{\gamma}(0) = \log_C(A)$:

Then
\begin{equation}\frac{D}{ds}\bigg|_{s=0}\log_{\gamma(s)}(B) = -\frac{1}{2}(C^{-1}\log_C(A)\log_C(B) + \log_C(B)\log_C(A)^TC^{-1})\end{equation}

%-\nabla_{\log_C(A)} \log_C(B)$$

%Since $\log_C(A)$ and $\log_C(B)$ are constant vector fields when viewed in the tangent space at $C$, the directional derivative term vanishes: $D(\log_C(B))[\log_C(A)] = 0$.

%The covariant derivative becomes:

%And
%$$\nabla_{\log_C(A)} \log_C(B) = $$

Decomposing the product as:
\begin{equation}\log_C(A)\log_C(B) = \frac{1}{2}[\log_C(A), \log_C(B)] + \frac{1}{2}\{\log_C(A), \log_C(B)\}\end{equation}

where $\{X,Y\} = XY + YX$ is the anticommutator. We obtain:
\begin{equation}\frac{D}{ds}\bigg|_{s=0}\log_{\gamma(s)}(B) = -\frac{1}{2}[\log_C(A), \log_C(B)] - \frac{1}{2}\{\log_C(A), \log_C(B)\}_C\end{equation}

Similarly, 

\begin{equation} \frac{D}{ds}\bigg|_{s=0}\log_{\gamma(s)}(A) = -\log_C(A)\end{equation}

And the result follows by gathering the terms.

%Therefore:
%$$\frac{d}{ds}\bigg|_{s=0}\log_{\gamma(s)}(B) = -\frac{1}{2}[\log_C(A), \log_C(B)] - \frac{1}{2}\{\log_C(A), \log_C(B)\}_C$$

%Finally, the velocity of the Jacobi field is:
%$$\dot{J}(0) = \frac{d}{ds}\bigg|_{s=0}[\log_{\gamma(s)}(B) - \log_{\gamma(s)}(A)]$$
%$$= \left[-\frac{1}{2}[\log_C(A), \log_C(B)] - \frac{1}{2}\{\log_C(A), \log_C(B)\}_C\right] - (-\log_C(A))$$

%Thus:
%$$\dot{J}(0) = -\frac{1}{2}[\log_C(A), \log_C(B)] - \frac{1}{2}\{\log_C(A), \log_C(B)\}_C + \log_C(A)$$
\end{proof}

%\begin{corollary}[Covariant Derivative of Jacobi Field]\label{cor:jacobi-covariant}
%Under the same conditions as Proposition \ref{prop:airm-jacobi-velocity}, the covariant derivative of the Jacobi field at $s=0$ is:
%$$\frac{D}{ds} J|_{s=0} = \dot{J}(0) - \frac{1}{2}(C^{-1}\dot{\gamma}(0)J(0) + J(0)\dot{\gamma}(0)^TC^{-1})$$
%where $J(0) = \log_C(B) - \log_C(A)$ and $\dot{\gamma}(0) = \log_C(A)$.
%\end{corollary}

\begin{remark}
The formula in Proposition \ref{prop:airm-jacobi-velocity} is explicit. All terms can be computed using standard matrix operations: matrix logarithms (computable via eigendecomposition), matrix multiplications, and additions. No differential equation solving is required, making the computation both efficient and numerically stable.

%The geometric interpretation is clear: the term $\log_C(A)$ represents the linear interpolation ensuring $J(1) = 0$, the commutator $[\log_C(A), \log_C(B)]$ captures the curvature effect from non-commutativity, and the symmetric term $\{\log_C(A), \log_C(B)\}_C$ arises from the metric structure through the Christoffel symbols.
\end{remark}

\end{appendix}

%%%%%%%%%%%%%%%%%%%%%%%%%%%%%%%%%%%%%%%%%%%%%%
%% Support information, if any,             %%
%% should be provided in the                %%
%% Acknowledgements section.                %%
%%%%%%%%%%%%%%%%%%%%%%%%%%%%%%%%%%%%%%%%%%%%%%
%\begin{acks}[Acknowledgments]
%The authors would like to thank the anonymous referees, an Associate
%Editor and the Editor for their constructive comments that improved the
%quality of this paper.
%\end{acks}

%%%%%%%%%%%%%%%%%%%%%%%%%%%%%%%%%%%%%%%%%%%%%%
%% Funding information, if any,             %%
%% should be provided in the                %%
%% funding section.                         %%
%%%%%%%%%%%%%%%%%%%%%%%%%%%%%%%%%%%%%%%%%%%%%%
\begin{funding}
This research was supported in part by NIH grant R01 NS112303.
\end{funding}

\bibliographystyle{imsart-nameyear}
\bibliography{biblio}

\end{document}

% --- supplement: supplement.tex ---

\maketitle

\section*{Introduction}

This supplement provides technical details and proofs that support the main paper.
Section 1 contains additional proofs and technical lemmas.
Section 2 provides the necessary background on Riemannian geometry on the manifold of symmetric positive definite matrices.

\section{Technical Details and Additional Proofs}
\label{sec:proofs}

% \section*{Details of Riemannian ANOVA Statistic}

% To introduce the statistic that generalizes Wilk's Lambda, let us set up some notation and assumptions. Suppose we choose points $C^*$, $C_{ l }^\ast$, $l = 1, \ldots, g$ and a $d \times d$ positive definite matrix $\boldsymbol{\Gamma}$, where $d = p(p+1)/2$ and $p$ is the size of the connectomes. 
% The model consists of random variables $Y_{ l j }$, $l = 1, \ldots, g$, $j = 1, \ldots, n_l$.

% % whose distribution is described by 
% % \begin{align*}
% % p(Y_{ l j })\propto %\\
% %  \exp\left(-\frac{1}{2} \operatorname{Vec}_{ C_{ l }^\ast }( \log_{ C_{ l }^\ast } Y_{ l j } )^{T}\boldsymbol{\Gamma}^{-1}\operatorname{Vec}_{ C_{ l }^\ast }( \log_{ C_{ l }^\ast } Y_{ l j } ) \right)
% % \end{align*}

% Under this geometrical setting, the usual orthogonal decomposition that is the basis of ANOVA breaks down and we need to find a suitable substitute. 
% In order to develop it, set $\hat{C}$ to be the Fréchet sample mean of the whole sample and $\hat{C}_{ l }$ to be the one of the subsample $\lbrace Y_{ l j } \rbrace_j$. 
% Then, define vectors
% \begin{gather*}
% v_l = \operatorname{Vec}_{ \hat{C}_{ l } }( \log_{ \hat{C}_{ l } } \hat{C} ) \\
% u_{l,j} = \operatorname{Vec}_{ \hat{C}_{ l } }( \log_{ \hat{C}_{ l } } Y_{ l j } ) \\
% w_{l,j} = v_l - u_{l,j} 
% \end{gather*}

% A couple of remarks. First, notice the difference between $w_{lj}$ and the vector $\operatorname{Vec}_{ \hat{C} }( \log_{ \hat{C} } Y_{ l j } )$. 
% In general, they are not the same, but in the Euclidean case they coincide so the subsequent developments reduce to the traditional MANOVA in that case. 

% Second, the vectors $\lbrace u_{lj} \rbrace_j$ are centered, so we get 
% \begin{gather*}
% \sum_{j=1}^{n_l} w_{lj}w_{lj}^T = \sum_{j=1}^{n_l}(v_l - u_{lj})(v_l - u_{lj})^T \\
% = n_l v_l v_l^T + \sum_{j=1}^{n_l} u_{lj}u_{lj}^T
% \end{gather*}

% Adding over $l$:
% \begin{equation}
% \sum_{l=1}^g\sum_{j=1}^{n_l} w_{lj}w_{lj}^T = \sum_{l=1}^g n_lv_lv_l^T + \sum_{l=1}^g  \sum_{j=1}^{n_l} u_{lj}u_{lj}^T
% \label{orth}
% \end{equation}

% Write $T$ for the matrix in the LHS and W for the second summand on the RHS. They are both positive semidefinite and Equation \ref{orth} shows $T\geq W$. 

% So if $T$ happens to be positive definite, the statistic
% \[
% \Lambda^\ast = \frac{|W|}{|T|}
% \]
% ranges between 0 and 1. 

% Notice the model is over parametrized unless we make an additional assumption like $\sum_l \log_{ \hat{C} } \hat{C}_{ l } = 0$.

% \section*{Non-centrality and Power Analysis}

\section{Riemannian Geometry Background}
\label{sec:riemannian}

\subsection{Riemannian Geometry Review}
\label{app:jacobi-proof}

For a parcellation of size $p$, connectomes are symmetric, positive definite, $p \times p$ matrices. The set $\mathcal{M} = \mathrm{SPD} _{ p }$ is a convex cone, but it is not a linear subspace of the set of $p \times p$ matrices, $\mathrm{Mat} \left( p \right)$. They are an open subset of the linear subspace of $p \times p$
symmetric matrices, $\mathrm{Symm} _{ p }$. As such, they constitute a smooth manifold of dimension $d = \frac{ p \left( p + 1 \right) }{ 2 }$. We denote a generic point of $\mathcal{M}$ as $C$. The tangent space at any $C$, $T _{ C } \mathcal{M}$, is naturally isomorphic to $\mathrm{Symm} _{ p }$. 

Furthermore, $\mathcal{M}$ can be endowed with the structure of a Riemannian manifold by choosing a Riemannian metric.
As mentioned before, a popular choice is AIRM \citep{pennec}, which can be introduced in a couple of ways. 
On one hand, it is a Fisher information metric related to the fact that each $C \in \mathcal{M}$ is the set of parameters for a distribution, namely $N \left( 0 , C \right)$ \citep{lenglet}.
On the other hand, $\mathcal{M}$ is a homogeneous space of the Lie group $\mathrm{Mat} \left( p \right) ^{ \times }$ under the action given by $C \mapsto A C A^T $ for $A \in \mathrm{Mat} \left( p \right) ^{ \times }$. 
The left invariant metric on $\mathrm{Mat} \left( p \right) ^{ \times }$ descends to a metric on $\mathcal{M}$.  

AIRM has a prominent role in this area and it has several nice properties. It is a complete metric: geodesics stay on the manifold for infinite time (the boundary of $\mathcal{M}$, the set of semidefinite matrices, are at infinite distance in AIRM). Also, it has non-positive curvature. These two facts allow us to use Hadamard's theorem to conclude that for each $C$, the maps
$\exp _{ C } : \mathcal{M} \rightarrow T _{ C } \mathcal{M}$
and
$\log _{ C } : T _{ C } \mathcal{M} \rightarrow \mathcal{M}$
are global, mutually inverse diffeomorphisms.
AIRM introduces a geodesic distance $d \left( \cdot , \cdot \right)$ on $\mathcal{M}$ as well as a norm
$\norm{ \cdot } _{ C }$ on $T _{ C } \mathcal{M}$, which satisfy the relation
\begin{equation}\norm{ \log _{ C } \left( \Lambda \right) } _{ C } = d \left( C , \Lambda \right)\end{equation}
AIRM allows us to introduce several probabilistic features which we can use to perform statistical inference on them. First, it allows us to compute expected values. We cannot perform integration due to the lack of a linear structure. Instead, we use the notion of Fréchet mean: for a random connectome $C$,
\begin{equation}\mathbb{E} \left[ C \right] = \operatorname{argmin} _{ \Lambda } \norm{ \log _{ \Lambda } \left( C \right) } _{ \Lambda } ^{ 2 }\end{equation}
We can also introduce a notion of normal distribution, as follows. AIRM provides us with a linear isometry $\operatorname{Vec} _{ C } : T _{ C } \mathcal{M} \rightarrow \mathbb{R} ^{ p }$. For matrices $C ^{ \ast } \in \mathcal{M} , \Gamma \in \mathrm{SPD} _{ d }$, we say the random connectome $C$ has distribution $N \left( C ^{ \ast } , \Gamma \right)$ if the random vector $\operatorname{Vec} _{ C ^{ \ast } } \left( \log _{ C ^{ \ast } } \left( C \right) \right)$ has distribution $N \left( 0 , \Gamma \right)$.

\subsection{Technical Results}

The proof of Theorem \ref{thm:asymptotic} requires the following estimates. 

\begin{theorem}[Jacobi Field Quotient Identity]\label{thm:jacobi-field}
Let $M$ be a Riemannian manifold and $\gamma : [0,1] \to M$ be the geodesic $\gamma(s) = \exp_C(s\log_C(A))$ connecting $C$ to $A$. Consider the variation 
%$\sigma(s,t)$ 
$\sigma(s,t) = \exp_{\gamma(s)}(t\log(B))$
%with:
%\begin{itemize}
%\item $\sigma(0,t) = C$ for all $t$
%\item $\sigma(s,0) = \gamma(s)$
%\item $\sigma(1,1) = B$
%\end{itemize}
Let $J(s) = \frac{\partial \sigma(s,t)}{\partial t}\big|_{t=0}$ be the associated Jacobi field. Then for small $\varepsilon > 0$:
\begin{equation}\frac{\log_{\gamma(\varepsilon)}(B) - \Pi_{C \to \gamma(\varepsilon)}\left[\log_C(B)\right]}{\varepsilon} = \frac{D}{ds} J\bigg|_{s=\varepsilon} + O(\varepsilon)\end{equation}
where $\Pi_{C \to \gamma(\varepsilon)}$ denotes parallel transport along $\gamma$ from $C$ to $\gamma(\varepsilon)$.
\end{theorem}

\begin{proof}
We establish the result through a careful analysis of the Jacobi field evolution and its covariant derivatives.

Let $\{E_i(s)\}_{i=1}^n$ be a parallel orthonormal frame along $\gamma$, constructed by parallel transporting an orthonormal basis from $T_C M$ along the geodesic. Since the frame is parallel, we have $\frac{D}{ds} E_i = 0$ for all $i$ and $s \in [0,1]$.

Express the Jacobi field in this frame:
\begin{equation}J(s) = \sum_{i=1}^n J^i(s) E_i(s)\end{equation}
where $J^i(s) = \langle J(s), E_i(s) \rangle$ are the component functions.

Since the frame is parallel, the covariant derivatives of $J$ have a simple form:
\begin{equation}\frac{D}{ds} J = \sum_{i=1}^n \frac{dJ^i}{ds} E_i(s), \quad \frac{D^2}{ds^2} J = \sum_{i=1}^n \frac{d^2J^i}{ds^2} E_i(s)\end{equation}

The Jacobi equation $\frac{D^2}{ds^2} J + R(J, \dot{\gamma})\dot{\gamma} = 0$ becomes:
\begin{equation}\sum_{i=1}^n \frac{d^2J^i}{ds^2} E_i(s) + R(J(s), \dot{\gamma}(s))\dot{\gamma}(s) = 0\end{equation}

In the parallel frame, the curvature endomorphism $R(\cdot, \dot{\gamma})\dot{\gamma}$ has components:
\begin{equation}R(J, \dot{\gamma})\dot{\gamma} = \sum_{i,j=1}^n R_{ij} J^j E_i\end{equation}
where $R_{ij} = \langle R(E_j, \dot{\gamma})\dot{\gamma}, E_i \rangle$.

Therefore, the component equations are:
\begin{equation}\frac{d^2J^i}{ds^2} + \sum_{j=1}^n R_{ij}(s) J^j(s) = 0, \quad i = 1, \ldots, n\end{equation}

For each component function $J^i(s)$, we have the Taylor expansion around $s = 0$:
\begin{equation}J^i(s) = J^i(0) + s\frac{dJ^i}{ds}\bigg|_{s=0} + \frac{s^2}{2}\frac{d^2J^i}{ds^2}\bigg|_{s=0} + O(s^3)\end{equation}

From the Jacobi equation at $s = 0$:
\begin{equation}\frac{d^2J^i}{ds^2}\bigg|_{s=0} = -\sum_{j=1}^n R_{ij}(0) J^j(0)\end{equation}

Substituting:
\begin{equation}J^i(\varepsilon) = J^i(0) + \varepsilon\frac{dJ^i}{ds}\bigg|_{s=0} - \frac{\varepsilon^2}{2}\sum_{j=1}^n R_{ij}(0) J^j(0) + O(\varepsilon^3)\end{equation}

The parallel transport of $J(0) \in T_C M$ to $T_{\gamma(\varepsilon)} M$ is given by:
\begin{equation}\Pi_{C \to \gamma(\varepsilon)}[J(0)] = \sum_{i=1}^n J^i(0) E_i(\varepsilon)\end{equation}

This vector in $T_{\gamma(\varepsilon)} M$ has components $J^i(0)$ in the parallel frame at $\gamma(\varepsilon)$.

The Jacobi field at $\gamma(\varepsilon)$ is:
\begin{equation}J(\varepsilon) = \sum_{i=1}^n J^i(\varepsilon) E_i(\varepsilon)\end{equation}

Both $J(\varepsilon)$ and $\Pi_{C \to \gamma(\varepsilon)}[J(0)]$ are vectors in $T_{\gamma(\varepsilon)} M$. Their difference is:
\begin{equation}J(\varepsilon) - \Pi_{C \to \gamma(\varepsilon)}[J(0)] = \sum_{i=1}^n [J^i(\varepsilon) - J^i(0)] E_i(\varepsilon)\end{equation}

Substituting the expansion:
\begin{equation}J(\varepsilon) - \Pi_{C \to \gamma(\varepsilon)}[J(0)] = \varepsilon \sum_{i=1}^n \frac{dJ^i}{ds}\bigg|_{s=0} E_i(\varepsilon) - \frac{\varepsilon^2}{2}\sum_{i,j=1}^n R_{ij}(0) J^j(0) E_i(\varepsilon) + O(\varepsilon^3)\end{equation}

Note that $\sum_{i=1}^n \frac{dJ^i}{ds}\big|_{s=0} E_i(\varepsilon)$ is the parallel transport of $\frac{D}{ds} J\big|_{s=0}$ to $T_{\gamma(\varepsilon)} M$.

Dividing by $\varepsilon$:
\begin{equation}\frac{J(\varepsilon) - \Pi_{C \to \gamma(\varepsilon)}[J(0)]}{\varepsilon} = \sum_{i=1}^n \frac{dJ^i}{ds}\bigg|_{s=0} E_i(\varepsilon) - \frac{\varepsilon}{2}\sum_{i,j=1}^n R_{ij}(0) J^j(0) E_i(\varepsilon) + O(\varepsilon^2)\end{equation}

The covariant derivative $\frac{D}{ds} J$ along the geodesic has components $[\frac{D}{ds} J]^i(s) = \frac{dJ^i}{ds}(s)$.

From the Jacobi equation:
\begin{equation}\frac{d}{ds}\left[\frac{dJ^i}{ds}\right] = \frac{d^2J^i}{ds^2} = -\sum_{j=1}^n R_{ij}(s) J^j(s)\end{equation}

For small $\varepsilon$:
\begin{equation}[\frac{D}{ds} J]^i(\varepsilon) = [\frac{D}{ds} J]^i(0) - \varepsilon \sum_{j=1}^n R_{ij}(0) J^j(0) + O(\varepsilon^2)\end{equation}

Therefore, $\frac{D}{ds} J\big|_{s=\varepsilon}$ expressed in the frame at $\gamma(\varepsilon)$ is:
\begin{equation}\frac{D}{ds} J\big|_{s=\varepsilon} = \Pi_{C \to \gamma(\varepsilon)}[\frac{D}{ds} J\big|_{s=0}] - \varepsilon \Pi_{C \to \gamma(\varepsilon)}[R(J(0), \dot{\gamma}(0))\dot{\gamma}(0)] + O(\varepsilon^2)\end{equation}

From the quotient expression and the evolution of $\frac{D}{ds} J$:
\begin{equation}\Pi_{C \to \gamma(\varepsilon)}[\frac{D}{ds} J\big|_{s=0}] = \frac{D}{ds} J\big|_{s=\varepsilon} + \varepsilon \Pi_{C \to \gamma(\varepsilon)}[R(J(0), \dot{\gamma}(0))\dot{\gamma}(0)] + O(\varepsilon^2)\end{equation}

Substituting:
\begin{equation}\frac{J(\varepsilon) - \Pi_{C \to \gamma(\varepsilon)}[J(0)]}{\varepsilon} = \frac{D}{ds} J\big|_{s=\varepsilon} + \frac{\varepsilon}{2}\Pi_{C \to \gamma(\varepsilon)}[R(J(0), \dot{\gamma}(0))\dot{\gamma}(0)] + O(\varepsilon^2)\end{equation}

The curvature term $\frac{\varepsilon}{2}\Pi_{C \to \gamma(\varepsilon)}[R(J(0), \dot{\gamma}(0))\dot{\gamma}(0)]$ is of order $\varepsilon$, so:
\begin{equation}\frac{J(\varepsilon) - \Pi_{C \to \gamma(\varepsilon)}[J(0)]}{\varepsilon} = \frac{D}{ds} J\big|_{s=\varepsilon} + O(\varepsilon)\end{equation}

Finally, since $\gamma(\varepsilon) = \exp_C(\varepsilon \log_C(A))$ and $J(\varepsilon) = \log_{\gamma(\varepsilon)}(B)$ and $J(0) = \log_C(B)$:
\begin{equation}\frac{\log_{\gamma(\varepsilon)}(B) - \Pi_{C \to \gamma(\varepsilon)}[\log_C(B)]}{\varepsilon} = \frac{D}{ds} J\bigg|_{s=\varepsilon} + O(\varepsilon)\end{equation}
\end{proof}

\begin{corollary}[Logarithm Difference Formula]\label{cor:log-difference}
Let $C, A, B \in M$ and consider the geodesic $\gamma(s) = \exp_C(s\log_C(A))$. For small $\varepsilon > 0$, let $\gamma(\varepsilon)$ be a point on this geodesic close to $C$. Then:
\begin{equation}\frac{\log_{\gamma(\varepsilon)}(B) - \Pi_{C \to \gamma(\varepsilon)}[\log_C(B) - \log_C(A)] - \log_{\gamma(\varepsilon)}(A)}{\varepsilon} = \frac{D}{ds} \tilde{J}\bigg|_{s=\varepsilon} + O(\varepsilon)\end{equation}
where $\tilde{J}(s)$ is the Jacobi field along $\gamma$ associated to the variation $\sigma(s,t)$ defined by:
\begin{equation}\sigma(s,t) = \exp_{\gamma(s)}(t[\log_{\gamma(s)}(B) - \log_{\gamma(s)}(A)])\end{equation}

%\begin{itemize}
%\item $\tilde{J}(0) = \log_C(B) - \log_C(A)$
%\item $\tilde{J}(1) = 0$ (since $\sigma(1,t) = A$ for all $t$ in the variation)
%\end{itemize}
\end{corollary}

\begin{proof}
This variation satisfies:
$\sigma(s,0) = \gamma(s)$ for all $s$,
$\sigma(0,t) = \exp_C(t[\log_C(B) - \log_C(A)])$,
and $\sigma(1,t) = \exp_A(t \log_A(B))$ for all $t$ (since $\gamma(1) = A$ and $\log_A(A) = 0$).

The associated Jacobi field is:
\begin{equation}\tilde{J}(s) = \frac{\partial \sigma}{\partial t}\bigg|_{t=0} = \log_{\gamma(s)}(B) - \log_{\gamma(s)}(A)\end{equation}

Now we can decompose:
\begin{equation}\log_{\gamma(\varepsilon)}(B) = [\log_{\gamma(\varepsilon)}(B) - \log_{\gamma(\varepsilon)}(A)] + \log_{\gamma(\varepsilon)}(A)\end{equation}

The first term is $\tilde{J}(\varepsilon)$, which by Theorem \ref{thm:jacobi-field} equals:
\begin{equation}\tilde{J}(\varepsilon) = \Pi_{C \to \gamma(\varepsilon)}[\tilde{J}(0)] + \varepsilon \frac{D}{ds} \tilde{J}\big|_{s=\varepsilon} + O(\varepsilon^2)\end{equation}

Since $\tilde{J}(0) = \log_C(B) - \log_C(A)$:
\begin{equation}\log_{\gamma(\varepsilon)}(B) - \log_{\gamma(\varepsilon)}(A) = \Pi_{C \to \gamma(\varepsilon)}[\log_C(B) - \log_C(A)] + \varepsilon \frac{D}{ds} \tilde{J}\big|_{s=\varepsilon} + O(\varepsilon^2)\end{equation}

Rearranging and dividing by $\varepsilon$:
\begin{equation}\frac{\log_{\gamma(\varepsilon)}(B) - \Pi_{C \to \gamma(\varepsilon)}[\log_C(B) - \log_C(A)] - \log_{\gamma(\varepsilon)}(A)}{\varepsilon} = \frac{D}{ds} \tilde{J}\bigg|_{s=\varepsilon} + O(\varepsilon)\end{equation}
\end{proof}

%\begin{lemma}[Derivative of Logarithm Map with Moving Base Point]\label{lem:log-derivative}
%For a curve $\gamma(s)$ in the manifold of SPD matrices with velocity $\dot{\gamma}(s)$ and a fixed point $B$, the derivative of the logarithm map is:
%$$\frac{D}{ds}\bigg|_{s=0}\log_{\gamma(s)}(B) = -\nabla_{\dot{\gamma}(s)} \log_{\gamma(s)}(B)\bigg|_{s=0}$$
%where $\nabla$ is the Levi-Civita connection of the AIRM metric.
%\end{lemma}

%\begin{proof}
%This follows from the differential geometry of the logarithm map. The logarithm map $\log: \mathcal{M} \times \mathcal{M} \to T\mathcal{M}$ can be viewed as a map $(P, Q) \mapsto \log_P(Q) \in T_P\mathcal{M}$. When we vary the base point $P$ along a curve $\gamma(s)$ while keeping $Q = B$ fixed, the change in $\log_{\gamma(s)}(B)$ involves the intrinsic change due to moving the base point (captured by the connection) and the parallel transport of the vector as we move along $\gamma$. The formula $-\nabla_{\dot{\gamma}(0)} \log_{\gamma(0)}(B)$ encodes precisely this geometric change.
%\end{proof}

\begin{proposition}[AIRM Jacobi Field Velocity]\label{prop:airm-jacobi-velocity}
Let $(\mathcal{M}, g^{\mathrm{AIRM}})$ be the manifold of $n \times n$ symmetric positive definite matrices equipped with the Affine Invariant Riemannian Metric (AIRM). Let $C, A, B \in \mathcal{M}$ be three points, and let $\gamma: [0,1] \to \mathcal{M}$ be the geodesic from $C$ to $A$, given by:
\begin{equation}\gamma(s) = C^{1/2}(C^{-1/2}AC^{-1/2})^s C^{1/2}\end{equation}

Define the Jacobi field $J(s)$ along $\gamma$ arising from the variation:
\begin{equation}\sigma(s,t) = \exp_{\gamma(s)}(t[\log_{\gamma(s)}(B) - \log_{\gamma(s)}(A)])\end{equation}

Then 
\begin{equation}\frac{D}{ds}J \bigg|_{s = 0} = -\frac{1}{2}[\log_C(A), \log_C(B)] - \frac{1}{2}\{\log_C(A), \log_C(B)\}_C + \log_C(A)\end{equation}

where:
\begin{itemize}
\item $[X,Y] = XY - YX$ is the matrix commutator
\item $\{X,Y\}_C = C^{-1}XY + YX^TC^{-1}$ is the symmetrized product with the metric at $C$
\item $\log_P(Q) = P^{1/2}\log(P^{-1/2}QP^{-1/2})P^{1/2}$ is the Riemannian logarithm map
\end{itemize}
\end{proposition}

\begin{proof}
We denote $L_A = \log(C^{-1/2}AC^{-1/2})$ as the matrix logarithm, $L_B = \log(C^{-1/2}BC^{-1/2})$, and $M = C^{-1/2}AC^{-1/2}$ as the conjugated target matrix. Then $\log_C(A) = C^{1/2}L_A C^{1/2}$, $\log_C(B) = C^{1/2}L_B C^{1/2}$, $\gamma(s) = C^{1/2}M^s C^{1/2}$, and $\dot{\gamma}(s) = C^{1/2}L_A M^s C^{1/2}$.

From the definition of $\sigma$, 

\begin{equation}J(s) = \log_{\gamma(s)}(B) - \log_{\gamma(s)}(A)\end{equation}

%Rewrite 

%$$\sigma(s,t) = \gamma(s)^{1/2} N^t \gamma(s)^{1/2}$$

%where 

%$$N = \log_{\gamma(s)}(B) - \log_{\gamma(s)}(A) = \gamma(s)^{-1/2} (B - A) \gamma(s)^{-1/2}$$

%hence 

%$$J(s) = \gamma(s)^{1/2} \log(\gamma(s)^{-1/2}(B-A)\gamma(s)^{-1/2}) \gamma(s)^{1/2}$$ 

%\vspace{10cm}

%First, we compute $\frac{d}{ds}\log_{\gamma(s)}(A)$. Since $\gamma(s)$ is the geodesic from $C$ to $A$ with $\gamma(0) = C$ and $\gamma(1) = A$, the remaining distance along the geodesic from $\gamma(s)$ to $A$ is $(1-s)$ times the initial distance. By the properties of geodesics in Riemannian manifolds:
%$$\log_{\gamma(s)}(A) = (1-s)\dot\gamma(s)$$

%Therefore:
%$$\frac{d}{ds}\log_{\gamma(s)}(A) = -\log_C(A)$$

%Next, we compute $\frac{d}{ds}\bigg|_{s=0}\log_{\gamma(s)}(B)$ using Lemma \ref{lem:log-derivative}. 
For the AIRM metric at point $C \in \mathcal{M}$, the Levi-Civita connection is:
\begin{equation}\nabla_X Y = DY[X] - \frac{1}{2}(C^{-1}XY + YX^TC^{-1})\end{equation}

where $X, Y \in T_C\mathcal{M}$ are tangent vectors (symmetric matrices) and $DY[X]$ is the Euclidean directional derivative.

%Applying the lemma with $\dot{\gamma}(0) = \log_C(A)$:

Then
\begin{equation}\frac{D}{ds}\bigg|_{s=0}\log_{\gamma(s)}(B) = -\frac{1}{2}(C^{-1}\log_C(A)\log_C(B) + \log_C(B)\log_C(A)^TC^{-1})\end{equation}

%-\nabla_{\log_C(A)} \log_C(B)$$

%Since $\log_C(A)$ and $\log_C(B)$ are constant vector fields when viewed in the tangent space at $C$, the directional derivative term vanishes: $D(\log_C(B))[\log_C(A)] = 0$.

%The covariant derivative becomes:

%And
%$$\nabla_{\log_C(A)} \log_C(B) = $$

Decomposing the product as:
\begin{equation}\log_C(A)\log_C(B) = \frac{1}{2}[\log_C(A), \log_C(B)] + \frac{1}{2}\{\log_C(A), \log_C(B)\}\end{equation}

where $\{X,Y\} = XY + YX$ is the anticommutator. We obtain:
\begin{equation}\frac{D}{ds}\bigg|_{s=0}\log_{\gamma(s)}(B) = -\frac{1}{2}[\log_C(A), \log_C(B)] - \frac{1}{2}\{\log_C(A), \log_C(B)\}_C\end{equation}

Similarly, 

\begin{equation} \frac{D}{ds}\bigg|_{s=0}\log_{\gamma(s)}(A) = -\log_C(A)\end{equation}

And the result follows by gathering the terms.

%Therefore:
%$$\frac{d}{ds}\bigg|_{s=0}\log_{\gamma(s)}(B) = -\frac{1}{2}[\log_C(A), \log_C(B)] - \frac{1}{2}\{\log_C(A), \log_C(B)\}_C$$

%Finally, the velocity of the Jacobi field is:
%$$\dot{J}(0) = \frac{d}{ds}\bigg|_{s=0}[\log_{\gamma(s)}(B) - \log_{\gamma(s)}(A)]$$
%$$= \left[-\frac{1}{2}[\log_C(A), \log_C(B)] - \frac{1}{2}\{\log_C(A), \log_C(B)\}_C\right] - (-\log_C(A))$$

%Thus:
%$$\dot{J}(0) = -\frac{1}{2}[\log_C(A), \log_C(B)] - \frac{1}{2}\{\log_C(A), \log_C(B)\}_C + \log_C(A)$$
\end{proof}

%\begin{corollary}[Covariant Derivative of Jacobi Field]\label{cor:jacobi-covariant}
%Under the same conditions as Proposition \ref{prop:airm-jacobi-velocity}, the covariant derivative of the Jacobi field at $s=0$ is:
%$$\frac{D}{ds} J|_{s=0} = \dot{J}(0) - \frac{1}{2}(C^{-1}\dot{\gamma}(0)J(0) + J(0)\dot{\gamma}(0)^TC^{-1})$$
%where $J(0) = \log_C(B) - \log_C(A)$ and $\dot{\gamma}(0) = \log_C(A)$.
%\end{corollary}

\begin{remark}
The formula in Proposition \ref{prop:airm-jacobi-velocity} is explicit. All terms can be computed using standard matrix operations: matrix logarithms (computable via eigendecomposition), matrix multiplications, and additions. No differential equation solving is required, making the computation both efficient and numerically stable.

%The geometric interpretation is clear: the term $\log_C(A)$ represents the linear interpolation ensuring $J(1) = 0$, the commutator $[\log_C(A), \log_C(B)]$ captures the curvature effect from non-commutativity, and the symmetric term $\{\log_C(A), \log_C(B)\}_C$ arises from the metric structure through the Christoffel symbols.
\end{remark}